\theoremstyle{definition}
\newtheorem{theorem}{Theorem}[section]
\newtheorem{definition}[theorem]{Definition}
\newtheorem{lemma}[theorem]{Lemma}
\newtheorem{corollary}[theorem]{Corollary}
\newtheorem{remark}[theorem]{Remark}
\newcommand{\inspect}[1]{%
  \def\inspectspace{\mskip3mu\relax}% the same as \,\,
  \sbox\z@{$#1$}% normal spacing
  \sbox\tw@{\thickmuskip=0mu$#1$}% zero spacing around relations
  \ifdim\wd\tw@<\wd\z@ \def\inspectspace{\mskip9mu\relax}\fi % double
}
\DeclareMathOperator{\Existsa}{\exists}
\newcommand{\Exists}[1]{
  \inspect{#1}
  \Existsa #1 \inspectspace
}
\DeclareMathOperator{\Foralla}{\forall}
\newcommand{\Forall}[1]{
  \inspect{#1}
  \Foralla #1 \inspectspace
}
\newcommand{\F}{\mathcal{F}}
\newcommand{\M}{\mathcal{M}}
\newcommand{\la}{\langle}
\newcommand{\ra}{\rangle}
\newcommand\rest[1]{|_{#1}}
\newcommand{\PA}{\mathsf{PA}}
\newcommand{\gl}{\mathsf{GL}}
\newcommand{\GLP}{\mathsf{GLP}}
\newcommand{\RC}{\mathsf{RC}}
\newcommand{\QRC}{\mathsf{QRC_1}}
\newcommand{\isig}[1]{{\ensuremath {\mathrm{I}\Sigma_{#1}}}\xspace}
\newcommand{\Con}{\text{Con}}
\newcommand{\fv}{\normalfont{\text{fv}}} % free variables
\newcommand{\mdepth}{\normalfont{\text{d}}_\Diamond} % modal depth
\newcommand{\udepth}{\normalfont{\text{d}}_\forall} % quantifier depth
\newcommand{\Cl}{\mathcal{C}\ell} % closure
\renewcommand{\vec}[1]{\bm{#1}}
\newcommand{\gnum}[1]{\ulcorner #1 \urcorner} % the Gödel number of an expression
\newcommand{\xaltern}[1]{\sim_{#1}} % x-alternative assignments
\newcommand{\subst}[2]{[#1 {\leftarrow} #2]} % substitution in term or formula: replace the first argument by the second
\newcommand{\pair}[2]{\la #1, #2 \ra}
  \title{Quantified Reflection Calculus with one modality}
  \author{Ana de Almeida Borges\thanks{\url{anadealmeidagabriel@ub.edu}} \and Joost J. Joosten\thanks{\url{jjoosten@ub.edu}}}
  \date{Universitat de Barcelona\\\vspace{6mm} March 2020}
\begin{document}

\maketitle

  \begin{abstract}

    This paper presents the logic $\QRC$, which is a strictly positive fragment of quantified modal logic. The intended reading of the diamond modality is that of consistency of a formal theory. Predicate symbols are interpreted as parametrized axiomatizations. We prove arithmetical soundness of the logic $\QRC$ with respect to this arithmetical interpretation.

    Quantified provability logic is known to be undecidable. However, the undecidability proof cannot be performed in our signature and arithmetical reading. We conjecture the logic $\QRC$ to be arithmetically complete. This paper takes the first steps towards arithmetical completeness by providing relational semantics for $\QRC$ with a corresponding completeness proof. We also show the finite model property, which implies decidability.

  \end{abstract}

  \textbf{Keywords:}
    Provability logic,
    strictly positive logics,
    quantified modal logic,
    arithmetic interpretations,
    feasible fragments.

\section{Introduction}

We present a new provability logic $\QRC$, standing for Quantified Reflection Calculus with one modality. The best known provability logic is perhaps $\gl$ \cite{Boolos:1993:LogicOfProvability}. Recall that $\gl$ is a PSPACE decidable propositional modal logic where the modality $\Box$ is used to model formal provability in some base theory such as Peano Arithmetic ($\PA$). Likewise, the dual modality $\Diamond$ is used to model consistency over the base theory. By Solovay's celebrated completeness result \cite{Solovay:1976} we know that, in a sense, the logic $\gl$ generates exactly the provably-in-$\PA$ structural behavior of formal provability.

Let us make this slightly more precise. By a \emph{realization} $\star$  we mean a map from propositional variables to sentences in the language of Peano Arithmetic. The realization is extended to all propositional modal formulas by defining $(\varphi \wedge \psi)^\star := \varphi^\star \wedge \psi^\star$ and likewise for other Boolean connectives. Finally, we define $(\Box \varphi)^\star := \Box_{\PA} \varphi^\star$, where $\Box_{\PA}$ is a formula in the language of $\PA$ that arithmetizes formal provability in $\PA$ in the sense\footnote{We refrain from distinguishing a formula $\varphi$ from its Gödel number $\gnum{\varphi}$.} that $\PA\vdash \chi$ if and only if $\mathbb N \models \Box_\PA \chi$. We can now paraphrase Solovay's result as $\gl = \{ \varphi \mid \Forall{\star} \, \PA \vdash \varphi^\star \}$.

After Solovay's completeness theorem, it was natural to ask whether one could find a logic that generates exactly the provably-in-$\PA$ structural behavior of formal provability for (relational) \emph{quantified modal logic}. The main difference with $\gl$ is that we now understand a realization $*$ as a map from relation symbols to sentences in the language of Peano Arithmetic such that the free variables match the arity of the relation symbol. Vardanyan showed in \cite{Vardanyan1986} that the situation is now completely different; now  $\{ \varphi \mid \Forall{*} \, \PA \vdash \varphi^* \}$ is $\Pi^0_2$-complete, a big jump from the PSPACE decidability of $\gl$. 

Visser and de Jonge showed that Vardanyan's result can be extended to a wide range of arithmetical theories and called their paper \emph{No escape from Vardanyan's theorem} \cite{VisserAndDeJonge:2006:NoEscape}. Here we shall take some first steps to indeed find an escape to Vardanyan's theorem. We do so by making two adaptations to the standard setting. First, we resort to a very small fragment of Relational Predicate Modal logic called the \emph{strictly positive fragment}. Second, we slightly change the realizations so that we interpret relation symbols not directly as formulas, but as axiomatizations of theories. As such our study follows a recent development of \emph{strictly positive logics} in general (such as \cite{Kikot2019}) and \emph{reflection calculi} in particular (see  \cite{Dashkov:2012:PositiveFragment}, \cite{Beklemishev:2014:PositiveProvabilityLogic}, and \cite{HermoFernandez:2019:BracketCalculus}).

Japaridze \cite{Japaridze:1988} generalized the logic $\gl$ to a polymodal version called $\GLP$, and Beklemishev \cite{Beklemishev:2005:VeblenInGLP} generalized this further to a transfinite setting yielding $\GLP_\Lambda$, where for each ordinal $\xi < \Lambda$ there is a provability modality $[\xi]$, and larger ordinals refer to stronger provability notions. The logic $\GLP_\omega$ has been successfully used in performing a modular ordinal analysis of $\PA$ and related systems (see \cite{Beklemishev:2004:ProvabilityAlgebrasAndOrdinals}, and more recently \cite{BeklemishevPakhomov:2019:GLPforTheoriesOfTruth}). A key feature in the ordinal analysis is that consistency operators $\la n \ra$ can be interpreted as reflection principles, which are finitely axiomatizable. 

However, an interpretation of limit modalities like $\la \omega \ra$ would require non finitely axiomatizable reflection schemata. One way to overcome this problem is by resorting to what was coined the \emph{Reflection Calculus} \cite{Dashkov:2012:PositiveFragment}, \cite{Beklemishev2012} and its transfinite version $\RC_\Lambda$ \cite{FernandezJoosten:2014:WellOrders}. Reflection calculi only allow strictly positive formulas based solely on propositional variables, a verum constant, consistency operators and conjunctions. As such, the arithmetical realizations as above can be taken to be \emph{arithmetical theories} instead of arithmetical formulas. 

The logic $\QRC$ we present in this paper follows this set-up: we will work with sequents of the form $\varphi \vdash \psi$ where both $\varphi$ and $\psi$ are strictly positive formulas built up from $\top$, predicate symbols, conjunction, universal quantification and the $\Diamond$ modality. The latter will refer to the usual notion of formal consistency and predicate symbols are interpreted as theories parametrized by the free variables.

Independently of the reflection calculi, other strictly positive modal logics were studied because of  their computational desirable properties when compared to their non-strict counterparts (see \cite{Kikot2019} for an example). In this line, the logic $\RC$ can be seen as a PTIME decidable fragment of the PSPACE complete logic $\GLP$ (shown in \cite{Dashkov:2012:PositiveFragment}). If indeed the logic $\QRC$ we present in this paper turns out to be arithmetically complete, this would yield, in a sense, a shift from undecidability ($\Pi^0_2$-complete) to decidability when resorting to a strictly positive reflection fragment.

\section{Quantified Reflection Calculus with one modality}

The \emph{Quantified Reflection Calculus with one modality}, or $\QRC$, is a sequent logic in a predicate modal language that is strictly positive.

Towards describing the language of $\QRC$, we fix a countable set of variables $x_0, \hdots$ (also referred to as $x, y, z$, etc.) and define a signature $\Sigma$ as a set of constants and a set of relation symbols with corresponding arity (we have no function symbols). We use the letters $c, c_i, \hdots$ to refer to constants and the letters $S, S_i, \hdots$ to refer to relation symbols.

  Given a signature, a term $t$ is either a variable or a constant of that signature.
  Both $\top$ and any $n$-ary relation symbol applied to $n$ terms are atomic formulas.
  The set of formulas is the closure of the atomic formulas under the binary connective $\land$, the unary modal operator $\Diamond$, and the quantifier $\forall x$, where $x$ is a variable. Formulas are represented by Greek letters such as $\varphi, \psi, \chi$, etc.

The free variables of a formula are defined as usual. The expression $\varphi\subst{x}{t}$ denotes the formula $\varphi$ with all free occurrences of the variable $x$ simultaneously replaced by the term $t$. We say that $t$ is free for $x$ in $\varphi$ if no occurrence of a free variable in $t$ becomes bound in $\varphi\subst{x}{t}$.

\begin{definition}%[$\QRC$]
  Let $\Sigma$ be a signature and $\varphi$, $\psi$, and $\chi$ be any formulas in that language. The axioms and rules of $\QRC$ are the following:

\begin{multicols}{2}
\begin{enumerate}[label=\upshape(\roman*),ref=\thetheorem.(\roman*)]
  \item $\varphi \vdash \top$ and $\varphi \vdash \varphi$;
  \item $\varphi \land \psi \vdash \varphi$ and $\varphi \land \psi \vdash \psi$;
  \item if $\varphi \vdash \psi$ and $\varphi \vdash \chi$, then\\ $\varphi \vdash \psi \land \chi$;
  \item if $\varphi \vdash \psi$ and $\psi \vdash \chi$, then $\varphi \vdash \chi$;\label{rule:cut}
  \item if $\varphi \vdash \psi$, then $\Diamond \varphi \vdash \Diamond \psi$;\label{rule:necessitation}
    \item $\Diamond \Diamond \varphi \vdash \Diamond \varphi$; \label{ax:transitivity}
    \item $\Diamond \Forall{x} \varphi \vdash \Forall{x} \Diamond \varphi$;
  \item if $\varphi \vdash \psi$, then $\varphi \vdash \Forall{x} \psi$\\($x \notin \fv(\varphi)$); \label{rule:forallR}
  \item if $\varphi\subst{x}{t} \vdash \psi$ then $\Forall{x} \varphi \vdash \psi$\\($t$ free for $x$ in $\varphi$);\label{rule:forallL}
  \item if $\varphi \vdash \psi$, then $\varphi\subst{x}{t} \vdash \psi\subst{x}{t}$\\($t$ free for $x$ in $\varphi$ and $\psi$);\label{rule:term_instantiation}
  \item if $\varphi\subst{x}{c} \vdash \psi\subst{x}{c}$, then $\varphi \vdash \psi$\\($c$ not in $\varphi$ nor $\psi$).\label{rule:constants}
\end{enumerate}
\end{multicols}

If $\varphi \vdash \psi$, we say that $\psi$ follows from $\varphi$ in $\QRC$. When the signature is not clear from the context, we write $\varphi \vdash_\Sigma \psi$ instead.
\end{definition}

We observe that our axioms do not include a universal quantifier elimination. However, this and various other rules are readily available via the following easy lemma.
\begin{lemma}
\label{lem:PQMLconsequences}
  The following are theorems (or derivable rules) of $\QRC$:
  \begin{enumerate}[label=\upshape(\roman*),ref=\thetheorem.(\roman*)]
    \item $\Forall{x} \Forall{y} \varphi \vdash \Forall{y} \Forall{x} \varphi$;
    \item $\Forall{x} \varphi \vdash \varphi\subst{x}{t}$ ($t$ free for $x$ in $\varphi$); \label{lem:instantiation}
    \item $\Forall{x} \varphi \vdash \Forall{y} \varphi\subst{x}{y}$ ($y$ free for $x$ in $\varphi$ and $y \notin \fv(\varphi)$);
    \item if $\varphi \vdash \psi$, then $\varphi \vdash \psi\subst{x}{t}$ ($x$ not free in $\varphi$ and $t$ free for $x$ in $\psi$);
    \item if $\varphi \vdash \psi\subst{x}{c}$, then $\varphi \vdash \Forall{x} \psi$ ($x$ not free in $\varphi$ and $c$ not in $\varphi$ nor $\psi$). \label{item:constants_forall}
  \end{enumerate}
\end{lemma}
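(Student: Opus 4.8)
The plan is to derive each of the five items from the axioms and rules of $\QRC$ given in the definition, using the cut rule (\ref{rule:cut}) freely to chain derivations together.

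\textbf{Item (ii), instantiation.} I would start here, since several other items depend on it. From the reflexivity axiom (i), we have $\varphi\subst{x}{t} \vdash \varphi\subst{x}{t}$. Since $t$ is free for $x$ in $\varphi$, rule (\ref{rule:forallL}) applies and yields $\Forall{x}\varphi \vdash \varphi\subst{x}{t}$ directly.

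\textbf{Item (i), commutation of quantifiers.} Using item (ii) twice: $\Forall{y}\Forall{x}\varphi \vdash$ well, more carefully, I want $\Forall{x}\Forall{y}\varphi \vdash \Forall{y}\Forall{x}\varphi$. By item (ii), $\Forall{x}\Forall{y}\varphi \vdash (\Forall{y}\varphi)\subst{x}{t}$; choosing $t$ to be a fresh variable and then $t = x$ appropriately, combined with another application of item (ii) for $y$, I get $\Forall{x}\Forall{y}\varphi \vdash \varphi$ modulo variable bookkeeping. Then I apply rule (\ref{rule:forallR}) twice — once to reintroduce $\Forall{x}$ (legal since $x \notin \fv(\Forall{y}\Forall{x}\varphi)$ once we are careful) and once for $\Forall{y}$ — being careful about the side conditions on free variables. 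The cleanest route is probably: from item (ii) with fresh variables, derive $\Forall{x}\Forall{y}\varphi \vdash \varphi\subst{x}{x'}\subst{y}{y'}$, then use (\ref{rule:forallR}) and the constants rule as needed; but since $x,y$ themselves are not free in $\Forall{x}\Forall{y}\varphi$, I can directly do $\Forall{x}\Forall{y}\varphi \vdash \varphi$ via item (ii) (twice, with $t=y$ then $t=x$... here one must check $y$ is free for $x$ in $\Forall{y}\varphi$, which fails, so instead instantiate with a fresh variable first and use item (iii)). Then reapply (\ref{rule:forallR}) twice to get $\Forall{y}\Forall{x}\varphi$. The side conditions hold because neither $x$ nor $y$ is free in $\Forall{x}\Forall{y}\varphi$.

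\textbf{Items (iii), (iv), (v).} For item (iii): by item (ii), $\Forall{x}\varphi \vdash \varphi\subst{x}{y}$ (using that $y$ is free for $x$ in $\varphi$); then apply rule (\ref{rule:forallR}) to conclude $\Forall{x}\varphi \vdash \Forall{y}\varphi\subst{x}{y}$, which is legal since $y \notin \fv(\Forall{x}\varphi)$ given $y \notin \fv(\varphi)$. For item (iv): given $\varphi \vdash \psi$ with $x \notin \fv(\varphi)$, rule (\ref{rule:forallR}) gives $\varphi \vdash \Forall{x}\psi$, and then cut with item (ii) (instance $\Forall{x}\psi \vdash \psi\subst{x}{t}$, valid since $t$ free for $x$ in $\psi$) yields $\varphi \vdash \psi\subst{x}{t}$. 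For item (v): given $\varphi \vdash \psi\subst{x}{c}$ with $x \notin \fv(\varphi)$ and $c$ not in $\varphi$ nor $\psi$, I first want to rename to bring back $x$. Since $c$ does not occur in $\varphi$ and $x \notin \fv(\varphi)$, we have $\varphi\subst{x}{c} = \varphi$; so $\varphi\subst{x}{c} \vdash \psi\subst{x}{c}$ holds trivially, and rule (\ref{rule:constants}) gives $\varphi \vdash \psi$. Hmm — but we want $\varphi \vdash \Forall{x}\psi$. So instead: from $\varphi \vdash \psi\subst{x}{c}$ and the observation $\varphi = \varphi\subst{x}{c}$, rule (\ref{rule:constants}) yields $\varphi \vdash \psi$; then since $x \notin \fv(\varphi)$, rule (\ref{rule:forallR}) gives $\varphi \vdash \Forall{x}\psi$.

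\textbf{Main obstacle.} The genuine bookkeeping difficulty is in item (i): the naive double application of instantiation (item (ii)) to strip both quantifiers is blocked because after instantiating $x$ with $y$ the variable $y$ would be captured, so one must route through fresh variables and item (iii) (capture-avoiding renaming), and then carefully re-apply (\ref{rule:forallR}) checking the free-variable side conditions at each step. Everything else is a short cut-chain. I would therefore prove the items in the order (ii), (iii), (i), (iv), (v), so that each relies only on earlier ones, and in the writeup I would spell out item (i) in full while treating the rest telegraphically.
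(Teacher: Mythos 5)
Items (ii)--(v) of your proposal are correct and coincide with the paper's own derivations: (ii) is Rule~\ref{rule:forallL} applied to $\varphi\subst{x}{t} \vdash \varphi\subst{x}{t}$, (iii) is Rule~\ref{rule:forallR} on top of (ii), (iv) is Rule~\ref{rule:forallR} followed by a cut with (ii), and your treatment of (v) -- observing that $\varphi\subst{x}{c} = \varphi$, applying Rule~\ref{rule:constants} to get $\varphi \vdash \psi$, then Rule~\ref{rule:forallR} -- is exactly the paper's argument.

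The one genuine gap is item (i), and it stems from a missed observation rather than a wrong strategy. You assume that stripping the quantifiers from $\Forall{x}\Forall{y}\varphi$ requires instantiating $x$ by some \emph{other} term (hence your worry that $y$ is not free for $x$ in $\Forall{y}\varphi$). But Rule~\ref{rule:forallL} (equivalently, item (ii)) may be applied with $t$ equal to the bound variable itself: a variable is always free for itself and $\varphi\subst{x}{x} = \varphi$, so from $\varphi \vdash \varphi$ two applications of Rule~\ref{rule:forallL} yield $\Forall{x}\Forall{y}\varphi \vdash \varphi$ with no renaming at all, and then two applications of Rule~\ref{rule:forallR} (legitimate because neither $x$ nor $y$ is free in $\Forall{x}\Forall{y}\varphi$) give $\Forall{x}\Forall{y}\varphi \vdash \Forall{y}\Forall{x}\varphi$; this is precisely the paper's proof. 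Your fallback through fresh variables is salvageable but, as sketched, does not land on the stated sequent: instantiating with fresh $x', y'$ and then ``reapplying Rule~\ref{rule:forallR} twice'' produces $\Forall{y'}\Forall{x'}\varphi\subst{x}{x'}\subst{y}{y'}$, an alphabetic variant of the target, and the calculus has no primitive $\alpha$-conversion; item (iii) renames only the \emph{outermost} quantifier, so converting back to $\Forall{y}\Forall{x}\varphi$ forces you to interleave the renamings with the generalizations (rename $x'$ back to $x$ before generalizing on $y'$, and so on) or first derive a monotonicity rule for $\forall$, none of which your sketch provides. Adopting the identity-substitution observation removes the ``main obstacle'' entirely.
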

\begin{proof}
  \hfill

  \begin{enumerate}[label=\upshape(\roman*)]
    \item Starting from $\varphi \vdash \varphi$, apply Rule~\ref{rule:forallL} twice (noting that a variable is always free for itself), concluding $\Forall{x} \Forall{y} \varphi \vdash \varphi$. Now neither $x$ or $y$ is free in the left-hand-side, so use Rule~\ref{rule:forallR} twice to obtain $\Forall{x} \Forall{y} \varphi \vdash \Forall{y} \Forall{x} \varphi$, as desired.
    \item By Rule~\ref{rule:forallL} applied to $\varphi\subst{x}{t} \vdash \varphi\subst{x}{t}$.
    \item By Rule~\ref{rule:forallR} applied to Lemma~\ref{lem:instantiation}.
    \item Observe that, since $x$ is not free in $\varphi$, we have $\varphi \vdash \Forall{x} \psi$ by Rule~\ref{rule:forallR}. We also have $\Forall{x} \psi \vdash \psi\subst{x}{t}$ by \ref{lem:instantiation}, which is enough by Rule~\ref{rule:cut}.
    \item This is a consequence of Rule~\ref{rule:constants} when $x$ is not free in $\varphi$ and Rule~\ref{rule:forallR}.
  \end{enumerate}
\end{proof}

In order to analyze various aspects of our calculus we define two complexity measures on formulas.

\begin{definition}%[$\mdepth$, $\udepth$]
  Given a formula $\varphi$, its \emph{modal depth} $\mdepth(\varphi)$ is defined inductively as follows:
  \begin{itemize}
    \item $\mdepth(\top) := \mdepth(S(x_0, \hdots, x_{n-1})) := 0$;
    \item $\mdepth(\psi \land \chi) := \max\{\mdepth(\psi), \mdepth(\chi)\}$;
    \item $\mdepth(\Forall{x} \psi) := \mdepth(\psi)$;
    \item $\mdepth(\Diamond \psi) := \mdepth(\psi) + 1$.
  \end{itemize}
  Given a finite set of formulas $\Gamma$, its modal depth is $\mdepth(\Gamma) := \max_{\varphi \in \Gamma}\{\mdepth(\varphi)\}$.

  The definition of quantifier depth $\udepth$ is analogous except for:
  \begin{itemize}
    \item $\udepth(\Forall{x} \psi) = \udepth(\psi) + 1$; and
    \item $\udepth(\Diamond \psi) = \udepth(\psi)$.
  \end{itemize}
\end{definition}

The modal depth provides a necessary condition for derivability, proven by an easy induction on $\varphi \vdash \psi$.

\begin{lemma}
\label{lem:mdepth}
  If $\varphi \vdash \psi$, then $\mdepth(\varphi) \geq \mdepth(\psi)$.
\end{lemma}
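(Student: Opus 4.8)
The plan is to prove this by induction on the derivation of $\varphi \vdash \psi$ in $\QRC$, showing at each step that $\mdepth(\varphi) \geq \mdepth(\psi)$. The statement is closed under all the axioms and rules, so I would proceed case by case through the axioms and rules of the calculus.

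\textbf{Base cases (axioms).} For the axioms in (i) and (ii), this is immediate: $\mdepth(\varphi) \geq 0 = \mdepth(\top)$, $\mdepth(\varphi) = \mdepth(\varphi)$, and $\mdepth(\varphi \land \psi) = \max\{\mdepth(\varphi), \mdepth(\psi)\} \geq \mdepth(\varphi)$ (and symmetrically for $\psi$). For Axiom~\ref{ax:transitivity}, $\mdepth(\Diamond\Diamond\varphi) = \mdepth(\varphi) + 2 \geq \mdepth(\varphi) + 1 = \mdepth(\Diamond\varphi)$. For the axiom $\Diamond\Forall{x}\varphi \vdash \Forall{x}\Diamond\varphi$, both sides have modal depth $\mdepth(\varphi) + 1$, since $\udepth$-style quantifiers do not affect $\mdepth$, so the inequality holds with equality.

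\textbf{Inductive cases (rules).} For Rule~\ref{rule:cut} (cut), if $\mdepth(\varphi) \geq \mdepth(\psi)$ and $\mdepth(\psi) \geq \mdepth(\chi)$, then $\mdepth(\varphi) \geq \mdepth(\chi)$ by transitivity of $\geq$. For the conjunction-introduction rule (iii), if $\mdepth(\varphi) \geq \mdepth(\psi)$ and $\mdepth(\varphi) \geq \mdepth(\chi)$, then $\mdepth(\varphi) \geq \max\{\mdepth(\psi),\mdepth(\chi)\} = \mdepth(\psi \land \chi)$. For Rule~\ref{rule:necessitation}, from $\mdepth(\varphi) \geq \mdepth(\psi)$ we get $\mdepth(\Diamond\varphi) = \mdepth(\varphi) + 1 \geq \mdepth(\psi) + 1 = \mdepth(\Diamond\psi)$. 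For Rules~\ref{rule:forallR} and \ref{rule:forallL}, adding or removing a $\Forall{x}$ does not change the modal depth, so the inequality is preserved verbatim. For Rules~\ref{rule:term_instantiation} and \ref{rule:constants}, substituting a term for a variable does not change the modal depth of a formula (substitution only touches atomic subformulas, which have modal depth $0$ regardless), so again the hypothesis transfers directly to the conclusion.

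\textbf{Main obstacle.} There is no real obstacle here; the only mild subtlety worth spelling out is the small lemma that $\mdepth(\varphi\subst{x}{t}) = \mdepth(\varphi)$, needed for the substitution rules, which itself follows by a trivial induction on $\varphi$ since the term-substitution operation leaves the $\land$, $\Diamond$, and $\Forall{x}$ structure intact and only modifies the arguments of relation symbols. Once that observation is in place, every case of the induction is a one-line inequality, and the proof concludes.
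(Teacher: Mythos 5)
Your proof is correct and follows exactly the route the paper indicates: an easy induction on the derivation of $\varphi \vdash \psi$, checking each axiom and rule (the paper leaves the case analysis implicit). Your explicit observation that $\mdepth(\varphi\subst{x}{t}) = \mdepth(\varphi)$, needed for Rules~\ref{rule:forallL}, \ref{rule:term_instantiation} and \ref{rule:constants}, is the right supporting detail and is handled correctly.
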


In particular, we get irreflexivity for free as stated in the next result. For other calculi this usually requires hard work via either modal or arithmetical semantics \cite{BeklemishevFernandezJoosten:2014:LinearlyOrderedGLP}, \cite{FernandezJoosten:2013:ModelsOfGLP}, \cite{FernandezJoosten:2018:OmegaRuleInterpretationGLP}.
\begin{corollary}
\label{cor:A|/-<>A}
  For any formula $\varphi$, we have $\varphi \not\vdash \Diamond \varphi$.
\end{corollary}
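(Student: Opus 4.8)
The plan is to derive this immediately from Lemma~\ref{lem:mdepth}. The key observation is that applying a diamond strictly increases modal depth: by the defining clause $\mdepth(\Diamond \psi) := \mdepth(\psi) + 1$, we have $\mdepth(\Diamond \varphi) = \mdepth(\varphi) + 1 > \mdepth(\varphi)$ for every formula $\varphi$.

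So I would argue by contradiction. Suppose $\varphi \vdash \Diamond \varphi$ for some formula $\varphi$. Then Lemma~\ref{lem:mdepth} gives $\mdepth(\varphi) \geq \mdepth(\Diamond \varphi) = \mdepth(\varphi) + 1$, which is impossible since $\mdepth(\varphi)$ is a natural number. Hence no such $\varphi$ exists.

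There is no real obstacle here: all the work has already been done in establishing Lemma~\ref{lem:mdepth} by induction on the derivation $\varphi \vdash \psi$. The only thing worth emphasizing in the writeup is the contrast alluded to before the statement — namely that in calculi like $\GLP$ or $\RC$ the analogous irreflexivity principle $\varphi \not\vdash \Diamond \varphi$ is typically unavailable by such a cheap syntactic measure and instead requires semantic arguments, whereas here the absence of any axiom or rule that can \emph{decrease} modal depth makes the depth function a sound invariant that rules out $\varphi \vdash \Diamond \varphi$ outright.

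\begin{proof}
  Suppose towards a contradiction that $\varphi \vdash \Diamond \varphi$ for some formula $\varphi$. By Lemma~\ref{lem:mdepth} we would have $\mdepth(\varphi) \geq \mdepth(\Diamond \varphi)$. But $\mdepth(\Diamond \varphi) = \mdepth(\varphi) + 1$ by definition of modal depth, so $\mdepth(\varphi) \geq \mdepth(\varphi) + 1$, which is impossible.
\end{proof}
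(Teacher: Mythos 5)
Your proof is correct and is exactly the argument the paper intends: the corollary follows immediately from Lemma~\ref{lem:mdepth} together with the fact that $\mdepth(\Diamond\varphi) = \mdepth(\varphi) + 1$, yielding the impossible inequality $\mdepth(\varphi) \geq \mdepth(\varphi) + 1$. No issues.
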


The following lemma tells us that adding constants to our signature does not strengthen the calculus.
\begin{lemma}
\label{lem:signatures}
  Let $\Sigma$ be a signature and let $C$ be a collection of new constants not yet occurring in $\Sigma$. By $\Sigma_C$ we denote the signature obtained by including these new constants $C$ in $\Sigma$. Let $\varphi, \psi$ be formulas in the language of $\Sigma$. Then, if $\varphi \vdash_{\Sigma_C} \psi$, so does $\varphi \vdash_\Sigma \psi$.
\end{lemma}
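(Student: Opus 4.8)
The plan is to proceed by induction on the derivation of $\varphi \vdash_{\Sigma_C} \psi$. The natural strategy is to show that every step of a derivation in the enriched signature can be mirrored by a step in $\Sigma$, after replacing the new constants from $C$ by fresh variables. So I would fix the derivation $d$ witnessing $\varphi \vdash_{\Sigma_C} \psi$; only finitely many constants of $C$ occur in $d$, say $c_1, \dots, c_k$. I would pick variables $y_1, \dots, y_k$ not occurring anywhere in $d$ (neither free nor bound in any formula appearing in $d$), and define a translation $(\cdot)^\circ$ that uniformly replaces each $c_i$ by $y_i$ in every formula of $d$. The goal is then to argue that the translated sequence $d^\circ$ is (essentially) a valid derivation in $\Sigma$ of $\varphi^\circ \vdash_\Sigma \psi^\circ$; since $\varphi, \psi$ are already in the language of $\Sigma$, we have $\varphi^\circ = \varphi$ and $\psi^\circ = \psi$, giving the result.

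The bulk of the work is checking, rule by rule, that the translation preserves applicability. Most rules are trivial because the translation commutes with $\land$, $\Diamond$, $\forall$ and with substitution of terms \emph{other than} the $c_i$; for instance the cut rule~\ref{rule:cut}, conjunction rules, the necessitation rule~\ref{rule:necessitation}, and the transitivity and Barcan axioms translate directly. The rules involving substitution and variable-freeness conditions — \ref{rule:forallR}, \ref{rule:forallL}, \ref{rule:term_instantiation}, and \ref{rule:constants} — need a little care: I would note that $\fv(\chi^\circ) = \fv(\chi) \cup \{y_i : c_i \text{ occurs in } \chi\}$, and since the $y_i$ are globally fresh, side conditions of the form ``$x \notin \fv(\varphi)$'' are preserved, and any term $t$ that was free for $x$ in $\chi$ has $t^\circ$ free for $x$ in $\chi^\circ$ (the $y_i$ being fresh cannot create new bindings). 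The one genuinely delicate case is the substitution in~\ref{rule:term_instantiation} and~\ref{rule:forallL} when the substituted term $t$ is itself one of the new constants $c_i$: here $t^\circ = y_i$, and one must check that $\chi^\circ\subst{x}{y_i}$ agrees with $(\chi\subst{x}{c_i})^\circ$ and that $y_i$ is still free for $x$ — which holds precisely because $y_i$ does not occur bound in $\chi$.

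The case that requires the most thought is Rule~\ref{rule:constants} (``if $\varphi\subst{x}{c} \vdash \psi\subst{x}{c}$ then $\varphi \vdash \psi$, with $c$ not in $\varphi$ nor $\psi$''), where $c$ could be either an old constant or one of the new $c_i$. If $c$ is old, the translation goes through unchanged. If $c = c_j$ is new, then after translation the premise becomes $\varphi^\circ\subst{x}{y_j} \vdash \psi^\circ\subst{x}{y_j}$ with $y_j$ not occurring in $\varphi^\circ$ or $\psi^\circ$ (since $c_j$ was not in $\varphi$ or $\psi$ and $y_j$ is fresh); this is no longer an instance of Rule~\ref{rule:constants}, but it \emph{is} an instance of the derived rule~\ref{item:constants_forall}-style reasoning — more directly, one can just apply Rule~\ref{rule:forallR} to get $\varphi^\circ \vdash \Forall{y_j}\psi^\circ\subst{x}{y_j}$ and then Lemma~\ref{lem:instantiation} (instantiating $y_j$ back to $x$) together with Rule~\ref{rule:cut} to recover $\varphi^\circ \vdash \psi^\circ$. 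I expect this interplay — showing that an application of the ``constant elimination'' rule over a constant that got turned into a variable can be simulated by the $\forall$-rules already available in $\Sigma$ — to be the main obstacle, and the reason the lemma is not completely immediate; everything else is a routine induction once the freshness bookkeeping is set up.
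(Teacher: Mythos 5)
Your overall strategy is exactly the paper's: replace the finitely many constants of $C$ occurring in the derivation by globally fresh variables, check rule by rule that the translated sequence is still a derivation in $\Sigma$, and observe that the only delicate case is the generalization-of-constants rule. However, your treatment of that key case contains a genuine error. From the translated premise $\varphi^\circ\subst{x}{y_j} \vdash \psi^\circ\subst{x}{y_j}$ you propose to ``apply Rule~\ref{rule:forallR} to get $\varphi^\circ \vdash \Forall{y_j}\psi^\circ\subst{x}{y_j}$''. This step is not available in general: Rule~\ref{rule:forallR} never changes the left-hand side, so its conclusion would still have $\varphi^\circ\subst{x}{y_j}$ on the left, and its side condition requires $y_j \notin \fv\big(\varphi^\circ\subst{x}{y_j}\big)$, which fails whenever $x$ is free in $\varphi$ --- and Rule~\ref{rule:constants} places no restriction forbidding $x$ from being free in $\varphi$. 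The same objection applies to invoking Lemma~\ref{item:constants_forall}, which also demands $x \notin \fv(\varphi)$. So your simulation only covers the special case $x \notin \fv(\varphi)$, which is precisely not the interesting one.

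The repair is simpler than what you attempted and is the one the paper uses: apply the term instantiation rule (Rule~\ref{rule:term_instantiation}) to the translated premise, substituting $x$ for $y_j$. Since $y_j$ is globally fresh, $(\varphi^\circ\subst{x}{y_j})\subst{y_j}{x} = \varphi^\circ$ and likewise for $\psi^\circ$, and $x$ is free for $y_j$ because every occurrence of $y_j$ sits at a position where $x$ occurred free (hence not under any $\forall x$), so no capture can occur. This yields $\varphi^\circ \vdash \psi^\circ$ in one step, with no appeal to Rule~\ref{rule:forallR}, Lemma~\ref{lem:instantiation}, or Rule~\ref{rule:cut}. The rest of your bookkeeping (freshness of the $y_i$, commutation of the translation with substitution, preservation of the side conditions in Rules~\ref{rule:forallR}--\ref{rule:term_instantiation}) is fine and matches the paper's intended argument.
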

\begin{proof}
  This is a standard result and a proof for a calculus similar to ours can be found in Section~1.8 of \cite{Goldblatt2011}. The idea is to replace every constant from $C$ appearing in the proof of $\varphi \vdash_{\Sigma_C} \psi$ by a fresh variable. It can easily be seen that axioms are mapped to axioms under this replacement, and that the rules are also mapped correctly. The most interesting case is that of the generalization of constants rule, because replacing new constants by variables in the premise $\varphi\subst{x}{c} \vdash_{\Sigma_C} \psi\subst{x}{c}$ may leave us unable to apply the same rule. Fortunately the term instantiation rule (Rule~\ref{rule:term_instantiation}) suffices to complete the proof.
\end{proof}

\section{Arithmetical semantics}

In this section we look at the intended arithmetical reading of the logic $\QRC$. We consider mathematical theories in the language $\{ 0, 1, +, \times, \leq, = \}$ of arithmetic. We refer the reader to \cite{HajekPudlak:1993:Metamathematics} for details and definitions. We recall that bounded formulas are those formulas where each quantifier occurs bounded as in $\forall y \leq t$, where $y$ does not occur in $t$. The $\Sigma_1$ formulas are those that arise by existential quantification of bounded formulas. Sets of numbers that can be defined by a $\Sigma_1$ formula are called \emph{c.e.} for \emph{computably enumerable}.

The theory \isig{1} contains the defining axioms for our constants and function symbols, say as in Robinson's arithmetic, and moreover allows induction for $\Sigma_1$ formulas. It is well-known that \isig{1} proves $\Sigma_1$-collection, that is:
  \begin{equation*}
    \Forall{ x {<} z} \Exists{y} \varphi(x,y) \to \Exists{y_0} \Forall{x {<} z} \Exists{y{<}y_0} \varphi(x,y)
    .
  \end{equation*}
For the sake of an easy exposition we shall assume that all the theories we work with extend \isig{1}. By $\tau(x)$ we denote the elementary formula that presents the standard axiomatization of \isig{1}. That is to say, $\mathbb N \models \tau (n)$ if and only if $n$ is the Gödel number of an axiom of \isig{1}.

In the arithmetical interpretation of the propositional logic $\RC$, the propositional variables are mapped to (axiomatizations of) theories, and the conjunction of two theories is interpreted as the union of both theories (corresponding to a disjunction in the sense of either being an axiom of the one or of the other). The arithmetical interpretation of each diamond modality is a consistency notion. 

We will fix a provability predicate $\Box_\alpha \varphi$ formalizing the existence of a Hilbert-style proof, which is a sequence of formulas the last of which is $\varphi$ and such that each element of the sequence is either a logical axiom, an axiom in the sense of $\alpha$, or the result of applying a rule to earlier elements in the sequence. We denote the dual consistency notion by  $\Con_{\alpha}(\psi)$ and sometimes write $\Con_{\alpha}$ instead of $\Con_{\alpha}(\top)$.  The following lemma is standard for $\Sigma_1$ axiomatizations $\alpha$ and the reader can consult \cite{Boolos:1993:LogicOfProvability} for details.

\begin{lemma}\label{theorem:provabilityTriviality}
  For any $\Sigma_1$ formula $\alpha$, we have that
\begin{enumerate}[label=\upshape(\roman*),ref=\thetheorem.(\roman*)]
\item\label{theorem:provabilityTriviality:itemTransitivity}
$\isig{1} \vdash \Con_\alpha ( \Con_\alpha) \to \Con_\alpha$;
\item\label{theorem:provabilityTriviality:itemBarcan}
  $\isig{1} \vdash \Exists{z} \Box_\alpha \varphi \to \Box_\alpha \Exists{z} \varphi$.
\end{enumerate}
\end{lemma}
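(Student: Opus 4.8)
The plan is to prove both items by internalizing into $\isig{1}$ the standard external reasoning about Hilbert-style proofs, using the fact that $\alpha$ being $\Sigma_1$ makes the provability predicate $\Box_\alpha$ itself $\Sigma_1$, so that provable $\Sigma_1$-completeness applies.

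For item~\ref{theorem:provabilityTriviality:itemTransitivity}, I would first note that this is the contrapositive of formalized transitivity of provability: $\isig{1} \vdash \Box_\alpha \varphi \to \Box_\alpha \Box_\alpha \varphi$, instantiated and massaged appropriately. The cleanest route is to argue inside $\isig{1}$: assume $\neg \Con_\alpha$, i.e. $\Box_\alpha \bot$. Since $\Box_\alpha \bot$ is a true $\Sigma_1$ sentence (under the assumption), provable $\Sigma_1$-completeness gives $\Box_\alpha(\Box_\alpha \bot)$, which is $\Box_\alpha(\neg\Con_\alpha)$, i.e. $\neg\Con_\alpha(\Con_\alpha)$. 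Taking the contrapositive yields $\Con_\alpha(\Con_\alpha) \to \Con_\alpha$. The only subtlety is making sure the consistency notions $\Con_\alpha(\Con_\alpha)$ and $\Con_\alpha$ are set up so that this substitution/reasoning is legitimate; this is exactly the content of the reference to \cite{Boolos:1993:LogicOfProvability}, so I would cite it rather than redo the Gödel–Löb derivability conditions from scratch.

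For item~\ref{theorem:provabilityTriviality:itemBarcan} --- the formalized Barcan-type principle $\Exists{z}\Box_\alpha\varphi \to \Box_\alpha\Exists{z}\varphi$ --- I would argue inside $\isig{1}$ as follows. Suppose $\Exists{z}\Box_\alpha\varphi(z, \vec{u})$, so fix a witness $n$ with $\Box_\alpha\varphi(n, \vec{u})$; let $p$ be a code of an $\alpha$-proof of $\varphi(n,\vec u)$. Then $p$ followed by one application of the existential-introduction logical rule is (the code of) an $\alpha$-proof of $\Exists{z}\varphi(z,\vec u)$. This last step is a primitive recursive operation on codes that $\isig{1}$ can verify, so $\isig{1}$ proves $\Box_\alpha\varphi(n,\vec u) \to \Box_\alpha\Exists{z}\varphi(z,\vec u)$ for the witnessed $n$, and existential elimination on $z$ closes the argument. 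Equivalently, one can phrase this directly via provable $\Sigma_1$-completeness together with the fact that $\Box_\alpha\Exists{z}\varphi$ is itself $\Sigma_1$, but the explicit proof-manipulation argument is the most transparent.

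The main obstacle, such as it is, lies not in the mathematical content but in the bookkeeping: one must be careful that the informal phrase ``append an inference rule to a proof'' corresponds to a genuinely $\Delta_1$- (indeed elementary-) definable operation on Gödel numbers, and that $\isig{1}$ has enough induction/collection to carry out the relevant manipulations uniformly in the parameters $\vec u$. Since we have explicitly assumed all theories extend $\isig{1}$ and that $\isig{1}$ proves $\Sigma_1$-collection, these resources suffice. Both items are genuinely standard --- this is why the statement is called a Lemma and attributed to the literature --- so in the write-up I would keep the argument brief, invoking provable $\Sigma_1$-completeness and the closure of the proof predicate under logical rules, and pointing to \cite{Boolos:1993:LogicOfProvability} and \cite{HajekPudlak:1993:Metamathematics} for the detailed verifications.
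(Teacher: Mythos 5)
Your proposal is correct and matches the paper's treatment: the paper offers no proof of this lemma, merely noting it is standard and pointing to \cite{Boolos:1993:LogicOfProvability}, and your sketch (provable $\Sigma_1$-completeness for item (i), formalized appending of an existential-introduction step to a proof code for item (ii)) is precisely the standard argument that citation refers to. One small caveat: your $\Sigma_1$-completeness route for item (i) tacitly uses the paper's standing assumption that $\alpha$ axiomatizes a theory extending $\isig{1}$; for a completely arbitrary $\Sigma_1$ formula $\alpha$ one would instead observe that an inconsistent theory proves every sentence, in particular $\Box_\alpha \bot$, which $\isig{1}$ verifies by direct manipulation of proof codes.
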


If we now interpret relation symbols from $\QRC$ as theories (parametrized by the free variables), then a universal quantification (which can be conceived of as an infinite conjunction) will be interpreted as an infinite union/disjunction, that is, an existential quantifier. These observations are reflected in Definition \ref{definition:Realisation} below. 

In this section, we reserve the variables $x_i$ for variables in $\QRC$, and the variables $y_i, z_i$ and $u$ are reserved for the arithmetic language with the understanding that the $y_i$ interpret the $\QRC$-constants $c_i$ and the $z_i$ interpret the $\QRC$-variables $x_i$. The variable $u$ is reserved for (Gödel numbers of) axioms of the theories that we denote.

\begin{definition}\label{definition:Realisation}%[Realization]
  A realization $^*$ takes $n$-ary predicate symbols in the language of $\QRC$ to $(n+1)$-ary $\Sigma_1$-formulas in the language of arithmetic, each representing a set of axioms of theories indexed by $n$ parameters. In particular, a realization $^*$ is such that $S(\vec{c}, \vec{x})^* = \sigma(\vec{y}, \vec{z}, u) $ for some $\Sigma_1$ formula $\sigma$ such that for each concrete numerical values for $\vec{y}, \vec{z}$ we have that $\mathbb{N} \models \sigma(\vec{y}, \vec{z}, u)$ if and only if $u$ is the Gödel number of an axiom of the intended corresponding theory. When we use the vector notation in $S(\vec{c}, \vec{x})^* =\sigma(\vec{y}, \vec{z}, u)$ we understand that $\vec{y}$ matches with $\vec{c}$ and $\vec{z}$ matches with $\vec{x}$, and thus if, say, $y_i$ occurs in $\sigma$, then $c_i$ occurs in $S(\vec{c}, \vec{x})$.
\end{definition}

  We extend a given realization $^*$ to $()^*$ on any formula of $\QRC$ as follows:
  \begin{itemize}
    \item $(\top)^* := \tau(u)$;
    \item $(S(\vec{c},\vec{x}))^* : = S(\vec{c},\vec{x})^* \vee \tau(u)$;
    \item $\big(\psi(\vec{c},\vec{x}) \land \delta(\vec{c},\vec{x})\big)^* := \big(\psi(\vec{c},\vec{x})\big)^* \vee \big(\delta(\vec{c},\vec{x})\big)^*$;
    \item $\big(\lozenge \psi(\vec{c},\vec{x})\big)^* := \tau(u) \lor (u = \gnum{\Con_{(\psi(\vec{c},\vec{x}))^*}})$;
    \item $\big(\Forall{x_i} \psi(\vec{c}, \vec{x})\big)^* := \Exists{z_i} \big(\psi(\vec{c}, \vec{x})\big)^*$.
  \end{itemize}
From now on we omit outer brackets, using the same notation for ${^{*}}$ and $()^*$. This may lead to confusion for predicate symbols, but the context should tell us which reading to use. We fix the notation $\psi(\vec{c}, \vec{x})^* = \psi^*(\vec{y}, \vec{z})$ suppressing mention of $u$ when convenient.

Let $T$ be a c.e.~theory in the language of arithmetic which extends $\isig{1}$.  We define (recall that $\chi^*$ will in general depend on $\vec{y}$ and $\vec{z}$):
  \begin{equation*} 
  \mathcal{QRC}_1(T) = \{\varphi(\vec{c},\vec{x}) \vdash \psi(\vec{c},\vec{x}) \ | \
    \Forall{\, ^*} 
      T \vdash \Forall{\theta} \Forall{\vec{y}} \Forall{\vec{z}} (\square_{\psi^*} \theta \to \square_{\varphi^*} \theta)
  \}
  .
  \end{equation*}
  In the above we assume that all the free variables other than $u$ in $\psi^*\wedge \varphi^*$ are among the $\vec{y}$ and $\vec{z}$. The $\theta$ are sentences without free variables. Furthermore, we stress that all realizations map to $\Sigma_1$ formulas (modulo provable equivalence).
We defer the question of whether $\QRC = \mathcal{QRC}_1(T)$ for any sound c.e.~$T$ containing \isig{1} to a future paper and prove here only the soundness inclusion.
  %\subsection{Arithmetical soundness}

\begin{theorem}[Arithmetical soundness]
$\QRC \subseteq \mathcal{QRC}_1(\isig{1})$.
\end{theorem}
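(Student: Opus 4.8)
The plan is to proceed by induction on the height of a $\QRC$-derivation of $\varphi \vdash \psi$, verifying for each axiom and each rule that, for every realization $^*$,
\[ \isig{1} \vdash \forall\theta\,\forall\vec{y}\,\forall\vec{z}\,(\Box_{\psi^*}\theta \to \Box_{\varphi^*}\theta) , \]
which is precisely what membership of $\varphi \vdash \psi$ in $\mathcal{QRC}_1(\isig{1})$ requires. Before the induction I would record three auxiliary facts. First, \emph{every translated formula extends $\tau$}: an easy induction on $\varphi$ yields $\isig{1} \vdash \forall u\,(\tau(u) \to \varphi^*)$, so the theory axiomatized by $\varphi^*$ always contains $\isig{1}$; moreover each $\varphi^*$ is visibly, and hence $\isig{1}$-provably, a $\Sigma_1$ axiomatization, since realizations are $\Sigma_1$ and each clause of $(\cdot)^*$ preserves $\Sigma_1$-ness. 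Second, a \emph{substitution lemma}: $(\varphi\subst{x_i}{t})^* = \varphi^*\subst{z_i}{t^*}$, where $t^*$ is $y_j$ if $t = c_j$ and $z_j$ if $t = x_j$; this is an induction on $\varphi$, the delicate clause being $\Diamond$, where the arithmetized substitution function must be shown to commute with forming the consistency statements and their Gödel numbers, and where the proviso ``$t$ free for $x$ in $\varphi$'' is what guarantees the absence of variable capture on the arithmetic side. Third, a \emph{comparison lemma}: for $\Sigma_1$ axiomatizations $\alpha$ and $\beta$, possibly with parameters,
\[ \isig{1} \vdash \big(\forall u\,(\alpha(u) \to \Box_\beta u)\big) \leftrightarrow \big(\forall\theta\,(\Box_\alpha\theta \to \Box_\beta\theta)\big) . \]
The right-to-left direction is immediate, as axioms are provable. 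The left-to-right direction is proven inside $\isig{1}$ by taking an $\alpha$-proof of $\theta$, replacing each appeal to an $\alpha$-axiom with a $\beta$-proof of that axiom, and splicing the pieces together; collecting the replacement subproofs into one object is where $\Sigma_1$-collection is needed, which is the reason soundness is stated over $\isig{1}$, and Lemma~\ref{theorem:provabilityTriviality} is invoked here to know that $\Box$ of a $\Sigma_1$ axiomatization is itself $\Sigma_1$ and commutes suitably with the quantifiers that appear. In particular, $\isig{1} \vdash \forall u\,(\alpha(u) \to \beta(u))$ already implies $\isig{1} \vdash \forall\theta\,(\Box_\alpha\theta \to \Box_\beta\theta)$, and this ``no stronger than'' relation between $\Sigma_1$ axiomatizations is reflexive and transitive.

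Given these facts, most cases become bookkeeping, using that $\top^* = \tau$, that $(\varphi\land\psi)^* = \varphi^*\vee\psi^*$ lists the axioms of $\varphi^*$ together with those of $\psi^*$, that $(\Diamond\psi)^*$ axiomatizes $\isig{1}+\Con_{\psi^*}$, and that $(\forall x_i\psi)^* = \exists z_i\,\psi^*$. Thus $\varphi\vdash\top$ follows from the first fact. The axioms $\varphi\vdash\varphi$, $\varphi\land\psi\vdash\varphi$, $\varphi\land\psi\vdash\psi$ and Rule~\ref{rule:cut} follow from the comparison lemma and the preceding observations. For Rule~(iii) the two induction hypotheses give, in the axiom form of the comparison lemma, that every $\psi^*$-axiom and every $\chi^*$-axiom is $\varphi^*$-provable, hence so is every $(\psi\land\chi)^*$-axiom. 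For necessitation (Rule~\ref{rule:necessitation}) one specializes the induction hypothesis to $\theta := \bot$, obtaining $\isig{1}\vdash\Con_{\varphi^*}\to\Con_{\psi^*}$, so the unique non-$\tau$ axiom of $(\Diamond\psi)^*$ is provable in $(\Diamond\varphi)^*$ and the comparison lemma applies. Axiom~\ref{ax:transitivity} unwinds to $\isig{1}\vdash\Con_{\isig{1}+\Con_{\varphi^*}}\to\Con_{\varphi^*}$, which follows from Lemma~\ref{theorem:provabilityTriviality:itemTransitivity} together with the first fact. Axiom~(vii) reduces, via the comparison lemma, to showing that $(\Diamond\forall x\varphi)^*$, which axiomatizes $\isig{1}+\Con_{\exists z\,\varphi^*}$, proves --- uniformly in the parameter --- each instance $\Con_{\varphi^*}$ obtained by fixing the interpretation of $x$; this holds because each such instance is provably a subtheory of $\exists z\,\varphi^*$, so consistency transfers, the passage between $\Box$ and the existential quantifier in the axiomatization being handled by Lemma~\ref{theorem:provabilityTriviality:itemBarcan}.

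The quantifier rules~\ref{rule:forallR}--\ref{rule:constants} are where the substitution lemma is used. For~\ref{rule:forallR}, the induction hypothesis is, by the comparison lemma, equivalent to $\isig{1}\vdash\forall z_i\,\forall u\,(\psi^*\to\Box_{\varphi^*}u)$; since $z_i \notin \fv(\varphi^*)$, this is the same as $\isig{1}\vdash\forall u\,(\exists z_i\,\psi^* \to \Box_{\varphi^*}u)$, i.e.\ that $(\forall x\psi)^*$ is no stronger than $\varphi^*$, which is the conclusion again by the comparison lemma. For~\ref{rule:forallL} one instantiates the universally quantified $z_i$ in the hypothesis to $t^*$. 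For~\ref{rule:term_instantiation} one substitutes $t^*$ for $z_i$ throughout the universally closed induction hypothesis. For~\ref{rule:constants} the interpretation $y_c$ of the fresh constant $c$ is a fresh arithmetic variable, so the quantifier $\forall y_c$ in the hypothesis can simply be renamed to $\forall z_i$ to give the conclusion.

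The main obstacle is the comparison lemma, specifically its left-to-right direction: one must carry out, formalized inside $\isig{1}$, the replacement-and-splicing construction on proofs in the setting where $\alpha$ and $\beta$ are only $\Sigma_1$-presented rather than elementary, so that $\Box_\alpha$ is itself merely provably $\Sigma_1$; this is exactly what forces the base theory up to $\isig{1}$, through $\Sigma_1$-collection. A secondary but fiddly point is making the substitution lemma rigorous, since the clause $(\Diamond\psi)^* := \tau(u) \lor (u = \gnum{\Con_{\psi^*}})$ conceals an application of the arithmetized substitution function inside the term $\gnum{\Con_{\psi^*}}$; the commutation of $(\cdot)^*$ with substitution therefore has to be checked at the level of Gödel numbers, and the syntactic side conditions of the quantifier rules must be shown to translate into the corresponding non-capture requirements for the arithmetic substitutions.
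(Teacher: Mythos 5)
Your proposal is correct, and its engine is the same as the paper's: an external induction on the $\QRC$-derivation, with the substantive work being formalized proof-splicing inside $\isig{1}$, where $\Sigma_1$-collection and the formalized deduction theorem enter. The packaging, however, is genuinely different. The paper argues every case inline: for conjunction introduction and for Rule~\ref{rule:forallR} it extracts the finitely many axioms occurring in a given proof and recombines them, citing collection on the spot, and for the axiom $\Diamond \forall x\, \varphi \vdash \forall x\, \Diamond \varphi$ it proves two bespoke claims (its First and Second Claims), passing through the intermediate theory axiomatized by $\tau(u) \vee (u = \gnum{\forall z_i \Con_{\varphi^*}})$. You instead isolate the splicing argument once and for all in your comparison lemma (theorem-inclusion between $\Sigma_1$-presented theories is $\isig{1}$-provably equivalent to axiom-wise provability), together with a substitution lemma, after which each axiom and rule reduces to an axiom-level inclusion: your Rule~\ref{rule:forallR} case becomes a pure quantifier manipulation with collection hidden in the lemma, and your treatment of $\Diamond \forall x\, \varphi \vdash \forall x\, \Diamond \varphi$ shows directly that each instance $\Con_{\varphi^*\subst{z_i}{\zeta}}$ is provable from $\Con_{\exists z_i \varphi^*}$, bypassing the intermediate theory. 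You are also more explicit than the paper about $(\varphi\subst{x}{t})^* = \varphi^*\subst{z_i}{t^*}$ and its G\"odel-numbering subtleties, which the paper compresses into ``interchanging universal quantifiers''. One point to make explicit when writing this up: claims such as ``the non-$\tau$ axiom of $(\Diamond\psi)^*$ is provable in $(\Diamond\varphi)^*$'' must be verified \emph{inside} $\isig{1}$, uniformly in the parameters $\vec{y}, \vec{z}$, by fixing one standard $\isig{1}$-proof of the parameter-free universal statement and appending numeral instantiations under the box --- the paper's ``let $\pi$ be the standard proof'' step; the comparison lemma by itself does not supply this uniformity. (Also, your appeal to Lemma~\ref{theorem:provabilityTriviality:itemTransitivity} for Axiom~\ref{ax:transitivity} is as terse as the paper's: what is needed is $\Con_{\isig{1} + \Con_{\varphi^*}} \to \Con_{\varphi^*}$, which rests on provable $\Sigma_1$-completeness rather than on monotonicity via your first fact; this is a shared, easily fixable looseness, not a gap.)
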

\begin{proof}
  We proceed by (an external) induction on the proof of $\varphi \vdash \psi$. We shall briefly comment on some of the cases. 
The case of the axiom $\varphi \vdash \top$ is clear since by an easy induction on $\varphi$ we van prove that over predicate logic $\varphi^*(\vec{y}, \vec{z}, u) \leftrightarrow \tau(u) \vee \varphi'(\vec{y}, \vec{z}, u)$ for some formula $\varphi'$. The axioms $\varphi \land \psi \vdash \varphi$ are easily seen to be sound since $(\varphi \land \psi)^*= \varphi^* \vee \psi^*$, that is, the formula that defines the union of two axiom sets.

The rule that if $\varphi \vdash \psi$ and $\psi \vdash \chi$, then $\varphi \vdash \chi$ is straightforward but the rule 
that if $\varphi \vdash \psi$ and $\varphi \vdash \chi$, then $\varphi \vdash \psi \land \chi$ is slightly more tricky. To see the soundness, we fix a particular realization $*$ and reason in $\isig{1}$. Inside \isig{1} we fix arbitrary $\vec{y}$, $\vec{z}$ and $\theta$ and assume 
$\Box_{(\psi\wedge\chi)^*(\vec{y}, \vec{z})}\theta$, that is, $\Box_{\psi^*(\vec{y}, \vec{z})\vee\chi^*(\vec{y}, \vec{z})}\theta$. Thus, $\Box_{\psi^*(\vec{y}, \vec{z})}\big( \bigwedge \xi_i \to \theta \big)$ for some collection of axioms $\xi_i$ satisfying $\chi^*(\vec{y}, \vec{z})$. By the induction hypothesis on $\varphi\vdash\psi$ we obtain $\Box_{\varphi^*(\vec{y}, \vec{z})}\big( \bigwedge \xi_i \to \theta \big)$, so that $\Box_{\tau}\big( \bigwedge \xi_i \to (\bigwedge \varphi_j \to \theta ) \big)$ for some collection of axioms $\varphi_j$ satisfying 
$\varphi^*(\vec{y}, \vec{z})$.  Since all the $\xi_i$ satisfy $\chi^*(\vec{y}, \vec{z})$ we conclude $\Box_{\chi^*(\vec{y}, \vec{z})}\big( \bigwedge \varphi_j \to \theta  \big)$. Using now the induction hypothesis on $\varphi \vdash \chi$ we conclude $\Box_{\varphi^*(\vec{y}, \vec{z})}\big( \bigwedge \varphi_j \to \theta  \big)$ whence $\Box_{\varphi^*(\vec{y}, \vec{z})}\theta$ as was to be shown.

We will now see the soundness of the necessitation rule, that is,  if $\varphi \vdash \psi$, then $\Diamond \varphi \vdash \Diamond \psi$. We fix some realization $*$. The induction hypothesis for $\varphi\vdash \psi$ applied to the formula $\bot$ gives us $\isig{1} \vdash \Forall{\vec{y}, \vec{z}} \big(\Box_{\psi^*(\vec{y},\vec{z})}\bot \to \Box_{\varphi^*(\vec{y},\vec{z})}\bot\big)$, whence 
\begin{equation}\label{equation:ConsistencyImplications}
  \isig{1} \vdash
    \Forall{\vec{y}, \vec{z}} \big(
      \Con_{\varphi^*(\vec{y},\vec{z})} \to \Con_{\psi^*(\vec{y},\vec{z})}
    \big)
    .
\end{equation}
Let $\pi$ be the standard proof of this. We reason in $\isig{1}$, fixing parameters $\vec{y}, \vec{z}, \theta$ and assuming $\Box_{(\Diamond \psi)^*(\vec{y}, \vec{z})}\theta$. Since $(\lozenge \psi)^* := \tau(u) \lor (u = \gnum{\Con_{\psi^*(\vec{y},\vec{z})}})$, we conclude $\Box_\tau \big ( \Con_{\psi^*(\vec{y},\vec{z})} \to  \theta \big)$. We combine this proof with the proof $\pi$ of \eqref{equation:ConsistencyImplications} to conclude $\Box_\tau \big ( \Con_{\varphi^*(\vec{y},\vec{z})} \to \theta \big )$, whence $\Box_{(\Diamond \varphi)^*(\vec{y}, \vec{z})}\theta$.

The soundness of the axiom $\Diamond \Diamond \varphi \vdash \Diamond \varphi$ is similar, now using Lemma~\ref{theorem:provabilityTriviality:itemTransitivity} instead of \eqref{equation:ConsistencyImplications}. 

To see the soundness of the axiom $\Diamond \Forall{x_i} \varphi \vdash \Forall{x_i} \Diamond \varphi$ we start by proving a
First Claim:
  \begin{equation}
  \label{eq:first_claim}
    \isig{1}\vdash \Con_{(\Forall{x_i} \varphi)^*(\vec{y}, \vec{z})} \to \Forall{z_i} \Con_{ \varphi^*(\vec{y}, \vec{z})}
    .
  \end{equation}
  To prove this, we reason in \isig{1} and assume $\Exists{z_i} \Box_{\varphi^*(\vec{y}, \vec{z})} \bot$, whence for some number $\zeta$ we have that $\Box_{\varphi^*(\vec{y}, \vec{z})\subst{z_i}{\zeta}} \bot$. Then a slight variation of  Lemma~\ref{theorem:provabilityTriviality:itemBarcan} allows us to see that $\Box_{\Exists{z_i} \varphi^*(\vec{y}, \vec{z})} \bot$, and thus $\Box_{ (\Forall{x_i} \varphi)^*(\vec{y}, \vec{z})} \bot$.
  
  We now prove a Second Claim:
  \begin{equation}
  \label{eq:second_claim}
    \isig{1}\vdash \Box_{(\Forall{x_i} \Diamond \varphi)^*(\vec{y}, \vec{z})} \delta \to \Box_{\tau(u)\vee (u = \gnum{\forall x_i \Con_{\varphi^*(\vec{y}, \vec{z})}})} \ \delta
    .
  \end{equation}
  We observe $(\Forall{x_i} \Diamond \varphi)^*(\vec{y}, \vec{z}) = \Exists{z_i} (\Diamond \varphi)^*(\vec{y}, \vec{z}) = \Exists{z_i} \big( \tau (u) \vee u {=} \gnum{\Con_{\varphi^*(\vec{y}, \vec{z})}}\big)$, the latter being provably equivalent to $\tau(u) \vee \Exists{z_i} \big( u {=} \gnum{\Con_{\varphi^*(\vec{y}, \vec{z})}} \big)$. To prove the Second Claim, we reason in \isig{1} and assume the antecedent $\Box_{(\Forall{x_i} \Diamond \varphi)^*(\vec{y}, \vec{z})} \delta$ fixing some $\vec{y}, \vec{z}, \delta$. Thus, we find a collection of numbers $\zeta_j$ such that 
$\Box_\tau \big( \bigwedge_j \Con_{\varphi^* (\vec{y},\vec{z})\subst{z_i}{\zeta_j}} \to \delta\big)$. 
Clearly, $\Box_\tau \big(  \Forall{z_i} \Con_{\varphi^* (\vec{y},\vec{z})} \to \bigwedge_j \Con_{\varphi^* (\vec{y},\vec{z})\subst{z_j}{\zeta_j}}\big)$, which suffices to prove the Second Claim.

  Let us now go back the soundness of the axiom $\Diamond \Forall{x_i} \varphi \vdash \Forall{x_i} \Diamond \varphi$. We fix $*$, reason in \isig{1}, fix $\vec{y}, \vec{z}, \theta$, and assume $\Box_{(\Forall{x_i} \Diamond \varphi)^*(\vec{y},\vec{z})}\theta$. By the Second Claim and the formalized deduction theorem we get $\Box_\tau (\Forall{z_i} \Con_{\varphi^* (\vec{y}, \vec{z})} \to \theta)$. The First Claim now gives us $\Box_{(\Diamond \Forall{x_i} \varphi)^*(\vec{y}, \vec{z})}\theta$ as was to be shown.

The soundness of the $\forall$ introduction rule on the right, that if $\varphi \vdash \psi$, then $\varphi \vdash \Forall{x_i} \psi$ ($x_i \notin \fv(\varphi)$), is not hard but contains a subtlety. To prove it we fix $*$, reason in \isig{1}, fix $\vec{y}, \vec{z}, \theta$ and assume $\Box_{(\forall x_i \psi)^*(\vec{y},\vec{z})} \theta$. Since $(\forall x_i \psi)^*(\vec{y},\vec{z}) = \exists z_i \psi^* (\vec{y},\vec{z})$ we can find numbers $\zeta_j$ such that $\Box_\tau (\bigwedge_j \psi^* (\vec{y},\vec{z})\subst{z_i}{\zeta_j} \to \theta)$. Now by the induction hypothesis we get $\bigwedge_j \Box_{\varphi^*(\vec{y}, \vec{z})\subst{z_i}{\zeta_j}} \psi^* (\vec{y},\vec{z})\subst{z_i}{\zeta_j}$. Since $x_i\notin \fv(\varphi)$ we have  $\bigwedge_j \Box_{\varphi^*(\vec{y}, \vec{z})} \psi^* (\vec{y},\vec{z})\subst{z_i}{\zeta_j}$. Using $\Sigma_1$-collection we obtain $\Box_{\varphi^*(\vec{y}, \vec{z})} \bigwedge_j \psi^* (\vec{y},\vec{z})\subst{z_i}{\zeta_j}$ from which the required $\Box_{\varphi^*(\vec{y}, \vec{z})}\theta$ follows.

The soundness of the remaining rules is straightforward and boils down to interchanging universal quantifiers. 
\end{proof}

\section{Relational semantics}

There have been several proposals for relational semantics for modal propositional logics, from Kripke \cite{Kripke1963} to many others. Overviews can be found in \cite{HughesCresswell1996} and \cite{Goldblatt2011}. We essentially have first-order models glued together by an accessibility relation. Our interpretation of the universal quantifiers is actualist, which means that $\Forall{x} \varphi$ is true at a world $w$ if and only if $\varphi\subst{x}{d}$ is true at $w$ for every $d$ in the domain of $w$, i.e., for every entity $d$ that exists in that world. It might happen, however, that some other world $u$ has a different domain, and thus that it falsifies $\varphi\subst{x}{e}$ for some specific $e$.

We proceed by defining frames and relational models.

\begin{definition}%[Frame]
  A \emph{frame} $\F$ is a tuple $\la W, R, \{M_w\}_{w \in W}\ra$ where:
  \begin{itemize}
    \item $W$ is a non-empty set (the set of worlds, where individual worlds are referred to as $w, u, v$, etc);
    \item $R$ is a binary relation on $W$ (the accessibility relation); and
    \item each $M_w$ is a finite set (the domain of the world $w$, whose elements are referred to as $d, d_0, d_1$, etc).
 \end{itemize}
 The domain of the frame is $M := \bigcup_{w \in W} M_w$.
\end{definition}

\begin{definition}%[Relational model]
  A \emph{relational model} $\M$ in a signature $\Sigma$ is a tuple $\la \F, \{I_w\}_{w \in W},\allowbreak \{J_w\}_{w \in W} \ra$ where:
  \begin{itemize}
    \item $\F = \la W, R, \{M_w\}_{w \in W} \ra$ is a frame;
    \item for each $w \in W$, the interpretation $I_w$ assigns an element of the domain $M_w$ to each constant $c \in \Sigma$, written $c^{I_w}$; and
    \item for each $w \in W$, the interpretation $J_w$ is a function assigning a set of tuples $S^{J_w} \subseteq \wp((M_w)^n)$ to each $n$-ary relation symbol $S \in \Sigma$.
  \end{itemize}
\end{definition}

Even though we interpret the universal quantifiers in the actualist way, we cannot allow the domains of each world to be completely unrelated to each other. This is because we want statements such as the axiom $\Diamond \Forall{x} \varphi \vdash \Forall{x} \Diamond \varphi$ to be sound. This axiom forces us to have \emph{inclusive} frames, which means that if $w$ sees a world $u$, then the domain of $w$ is included or at least embedded in the domain of $u$. We also require that our frames be transitive, for we want the axiom $\Diamond \Diamond \varphi \vdash \Diamond \varphi$ to be sound.
Finally, the interpretation of a constant should indeed be constant throughout (the relevant part of) any useful model. Thus, we introduce the notion of adequate frames and models.

\begin{definition}%[Adequate frame, model]
  A frame $\F$ is \emph{adequate} if the accessibility relation $R$ is:
  \begin{itemize}
    \item inclusive: if $wRu$, then $M_w \subseteq M_u$; and
    \item transitive: if $wRu$ and $uRv$, then $wRv$.
  \end{itemize}
  A model is \emph{adequate} if it is based on an adequate frame and it is:
  \begin{itemize}
    \item concordant: if $wRu$, then $c^{I_w} = c^{I_u}$ for every constant $c$.
  \end{itemize}
  Note that in an adequate and rooted model the interpretation of the constants is the same at every world.
\end{definition}

In order to define truth at a world in a first-order model, we use assignments. A $w$-assignment $g$ is a function assigning a member of the domain $M_w$ to each variable in the language. %Alternatively we might have allowed variables to be assigned to any element of the full domain $M$ and, say, have formulas with no truth-value at specific worlds (where the interpretation of their free variables didn't exist). Section 15 of \cite{HughesCresswell1996} has more details about the several different options.\joost{If we'd need to shorten I could live with sacrificing these last two sentences although it is good to include \cite{HughesCresswell1996}.}
In an adequate frame, any $w$-assignment can be seen as a $v$-assignment as long as $wRv$, because $M_w \subseteq M_v$ and hence there is a trivial inclusion (or coercion) $\iota_{w, v} : M_w \to M_v$. If $g$ is a such a $w$-assignment, we represent the corresponding $v$-assignment $\iota_{w, v} \circ g$ by $g^\iota$ when $w$ and $v$ are clear from the context.

Two $w$-assignments $g$ and $h$ are \emph{$\Gamma$-alternative}, denoted by $g \xaltern{\Gamma} h$, if they coincide on all variables other than the ones in $\Gamma$. If $\Gamma = \{x\}$, we write $x$-alternative and $g \xaltern{x} h$.

We extend a given $w$-assignment $g$ to terms by defining $g(c) := c^{I_w}$ where $c$ is any constant.

\begin{definition}%[Truth at a world]
  Let $\M = \la W, R, \{M_w\}_{w \in W}, \{I_w\}_{w \in W}, \{J_w\}_{w \in W} \ra$ be a relational model in some signature $\Sigma$, and let $w \in W$ be a world, $g$ be a $w$-assignment, $S$ be an $n$-ary relation symbol, and $\varphi, \psi$ be formulas in the language of $\Sigma$.

  We define $\M, w \Vdash^g \varphi$ ($\varphi$ is true at $w$ under $g$) by induction on $\varphi$ as follows.
  \begin{itemize}

    \item $\M, w \Vdash^g \top$;

    \item $\M, w \Vdash^g S(t_0, \hdots, t_{n-1})$ iff $\la g(t_0), \hdots, g(t_{n-1}) \ra \in S^{J_w}$;

    \item $\M, w \Vdash^g \varphi \land \psi$ iff both $\M, w \Vdash^g \varphi$ and $\M, w \Vdash^g \psi$;

    \item $\M, w \Vdash^g \Diamond \varphi$ iff there is $v \in W$ such that $wRv$ and $\M, v \Vdash^{g^\iota} \varphi$;

    \item $\M, w \Vdash^g \Forall{x} \varphi$ iff for all $w$-assignments $h$ such that $h \xaltern{x} g$, $\M, w \Vdash^{h} \varphi$.

  \end{itemize}
\end{definition}

We now present a number of simple results needed to prove the relational soundness of $\QRC$. These are standard observations about either first-order models or Kripke models that we adapted to our case.

\begin{remark}%[Absent free variables]
\label{rem:assignment:absent_variables}
  Let $\M$ be an adequate model, $w$ be any world, $g, h$ be any $\Gamma$-alternative $w$-assignments, and $\varphi$ be a formula with no free variables in $\Gamma$. Then:
  \begin{equation*}
    \M, w \Vdash^g \varphi \iff \M, w \Vdash^{h} \varphi
    .
  \end{equation*}
\end{remark}
\begin{comment}
\begin{proof}
  By induction on $\varphi$. If $\varphi$ is $\top$ or a conjunction, the result is straightforward.

  We assume $\Gamma = \{x\}$ for ease of presentation.

  If $\varphi$ is a predicate $S$, then $\M, w \Vdash^g S(t_0, \hdots, t_{n-1})$ if and only if $\la g(t_0), \hdots, g(t_{n-1}) \ra \in S^{J_w}$. Since $g$ and $h$ agree on the value of all the $t_i$, this is also equivalent to $\la h(t_0), \hdots,\allowbreak h(t_{n-1}) \ra \in S^{J_w}$, which happens if and only if $\M, w \Vdash^{h} S(t_0, \hdots, t_{n-1})$.

  Let $\varphi$ be $\Diamond \psi$. We prove only the left-to-right implication (the other one is analogous). Assume $\M, w \Vdash^g \Diamond \psi$. Then there is a world $v$ such that $wRv$ and $\M, v \Vdash^{g^\iota} \psi$. In order to see that $\M, w \Vdash^h \Diamond \psi$, we can use $v$ as a witnessing world as long as we can show $\M, v \Vdash^{h^\iota} \psi$. But this is clear by the induction hypothesis, taking into account that since $g \xaltern{x} h$, then so $g^\iota \xaltern{x} h^\iota$.

  Suppose $\varphi$ is $\Forall{x} \psi$. We prove only the left-to-right implication (the other one is analogous). Assume $\M, w \Vdash^g \Forall{x} \psi$, and let $f$ be an arbitrary $w$-assignment such that $f \xaltern{x} h$. We want to check $\M, w \Vdash^f \psi$. By the assumption, it is enough to check that $f \xaltern{x} g$. But this is just a consequence of the transitivity of the $\xaltern{x}$ relation.
\end{proof}
\end{comment}

\begin{lemma}[Substitution in formula]
\label{lem:assignment:formula_substitution}
  Let $\M$ be an adequate model, $w$ be a world, and $g, \tilde{g}$ be $x$-alternative $w$-assignments such that $\tilde{g}(x) = g(t)$. Then for every formula $\varphi$ with $t$ free for $x$:
  \begin{equation*}
    \M, w \Vdash^{\tilde{g}} \varphi
    \iff
    \M, w \Vdash^g \varphi\subst{x}{t}
    .
  \end{equation*}
\end{lemma}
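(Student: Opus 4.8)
The plan is to prove the statement by a straightforward induction on the structure of the formula $\varphi$. The base cases are $\top$, which is trivial since both sides of the biconditional are always true, and the atomic case $S(t_0, \dots, t_{n-1})$. For the atomic case, the key observation is that if $t$ is free for $x$ in $S(t_0, \dots, t_{n-1})$, then $S(t_0, \dots, t_{n-1})\subst{x}{t}$ is $S(t_0', \dots, t_{n-1}')$ where each $t_i'$ is $t$ if $t_i = x$ and $t_i$ otherwise; and for each $i$ one checks that $g(t_i') = \tilde{g}(t_i)$, using that $\tilde{g}(x) = g(t)$ and that $g, \tilde{g}$ agree off $x$. Hence the relevant tuples coincide and membership in $S^{J_w}$ is the same on both sides.

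The conjunction case follows immediately from the induction hypothesis applied to each conjunct (note $t$ is free for $x$ in a conjunction iff it is free for $x$ in each conjunct). For the modal case $\varphi = \Diamond \psi$, I would argue: $\M, w \Vdash^{\tilde{g}} \Diamond\psi$ iff there is $v$ with $wRv$ and $\M, v \Vdash^{\tilde{g}^\iota} \psi$. The assignments $g^\iota$ and $\tilde{g}^\iota$ are still $x$-alternative (coercion preserves agreement off $x$), and $\tilde{g}^\iota(x) = \iota_{w,v}(\tilde g(x)) = \iota_{w,v}(g(t)) = g^\iota(t)$ — here using that $t$ is a term whose value is preserved under coercion, which holds for variables trivially and for constants by concordance of the model. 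Since $t$ is still free for $x$ in $\psi$, the induction hypothesis at world $v$ gives $\M, v \Vdash^{\tilde g^\iota}\psi \iff \M, v \Vdash^{g^\iota} \psi\subst{x}{t}$, and the latter is exactly the witness condition for $\M, w \Vdash^g \Diamond(\psi\subst{x}{t}) = \M, w \Vdash^g (\Diamond\psi)\subst{x}{t}$.

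The quantifier case $\varphi = \Forall{y}\psi$ is where the subtlety lies and is the main obstacle. If $y = x$, then $x$ is not free in $\Forall{x}\psi$, so $(\Forall{x}\psi)\subst{x}{t} = \Forall{x}\psi$, and since $g \xaltern{x} \tilde g$ the result follows from Remark~\ref{rem:assignment:absent_variables}. If $y \neq x$: because $t$ is free for $x$ in $\Forall{y}\psi$, the variable $y$ does not occur in $t$ (otherwise a free occurrence of $y$ in $t$ would get captured), so $g(t) = h(t)$ for any $y$-alternative $h$ of $g$, and similarly for $\tilde g$. I would then unfold: $\M, w \Vdash^{\tilde g} \Forall{y}\psi$ iff for all $h \xaltern{y} \tilde g$, $\M, w \Vdash^h \psi$; and $\M, w \Vdash^g (\Forall{y}\psi)\subst{x}{t} = \M, w \Vdash^g \Forall{y}(\psi\subst{x}{t})$ iff for all $h' \xaltern{y} g$, $\M, w \Vdash^{h'} \psi\subst{x}{t}$. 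Given $h' \xaltern{y} g$, define $\tilde{h'} \xaltern{x} h'$ by setting $\tilde{h'}(x) := h'(t)$; one checks $\tilde{h'} \xaltern{y} \tilde g$ (they agree off $\{x,y\}$, they agree at $x$ since $\tilde{h'}(x) = h'(t) = g(t) = \tilde g(x)$, and the value at $y$ is allowed to differ), so the hypothesis gives $\M, w \Vdash^{\tilde{h'}} \psi$, and the induction hypothesis (with $t$ still free for $x$ in $\psi$) yields $\M, w \Vdash^{h'} \psi\subst{x}{t}$; the converse direction is symmetric, matching each $h \xaltern{y} \tilde g$ to $h' := h\xaltern{x}\dots$ restricted appropriately. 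Care must be taken that the witness assignments are genuinely $w$-assignments into $M_w$, which is fine since all values involved come from $M_w$.
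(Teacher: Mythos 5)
Your proof is correct and takes essentially the same route as the paper's: the diamond case via the coerced assignments $g^\iota, \tilde{g}^\iota$ (with concordance handling constant terms), and the quantifier case via the auxiliary assignment $\tilde{h'}$ with $\tilde{h'}(x) := h'(t)$ and transitivity of alternativeness off $\{x,y\}$. One small point: your claim in the $y \neq x$ case that $y$ cannot occur in $t$ presupposes that $x$ actually occurs free in $\psi$; the paper covers all such degenerate cases at once by assuming without loss of generality that $x$ is free in $\varphi$ (otherwise invoking Remark~\ref{rem:assignment:absent_variables}), and your separate $y = x$ subcase is just one instance of that blanket reduction, so you should state it in general.
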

\begin{proof}
  By induction on $\varphi$. We only present the cases of the diamond and of the universal quantifier; the remaining cases are straightforward. We assume without loss of generality that $x$ is a free variable of $\varphi$, since otherwise we could use Remark~\ref{rem:assignment:absent_variables}.
  
  %In the case of predicates, assume that $\M, w \Vdash^{\tilde{g}} S(t_0, \hdots, t_{n-2}, x)$, where neither of the $t_i$ is $x$. Then $\la \tilde{g}(t_0), \hdots,\allowbreak \tilde{g}(t_{n-2}), \tilde{g}(x)\ra \in S^{J_w}$. Since $g \xaltern{x} \tilde{g}$, we know that $g(t_i) = \tilde{g}(t_i)$ for $0 \leq i < n - 1$. Furthermore, $g(t) = \tilde{g}(x)$ by assumption. Then $\la g(t_0), \hdots, g(t_{n-2}), g(t)\ra \in S^{I(w)}$, and consequently $\M, w \Vdash^g S(t_0, \hdots, t_{n-2}, t)$. The other implication is analogous.

  Suppose that $\varphi$ is $\Diamond \psi$ and assume that $\M, w \Vdash^{\tilde{g}} \Diamond \psi$. Then there is a world $v$ such that $wRv$ and $\M, v \Vdash^{\tilde{g}^\iota} \psi$. Note that $g^\iota \xaltern{x} \tilde{g}^\iota$ and $\tilde{g}^\iota(x) = g^\iota(t)$ (either $t$ is a variable and this is a consequence of $\tilde{g}(x) = g(t)$, or $t$ is a constant and this follows from $t^{I_w} = t^{I_v}$) and thus by the induction hypothesis $\M, v \Vdash^{g^\iota} \psi\subst{x}{t}$. This gives us $\M, w \Vdash^g \Diamond \psi\subst{x}{t}$, as desired. The other direction is analogous.

  Suppose now that $\varphi = \Forall{z} \psi$ and assume that $\M, w \Vdash^{\tilde{g}} \Forall{z} \psi$. Note that $x$ and $z$ are different variables, for otherwise $x$ would not be free in $\varphi$. Let $h$ be any $w$-assignment such that $h \xaltern{z} g$. We wish to show $\M, w \Vdash^h \psi\subst{x}{t}$. Define $\tilde{h}$ such that $\tilde{h} \xaltern{x} h$ and $\tilde{h}(x) := h(t)$. %We know that $\tilde{h}$ is a $w$-assignment because either $t$ is a variable and $h(t) \in M_w$ due to $h$ being a $w$-assignment, or $t$ is a constant, which means that $h(t) = t^S = g(t)$, and $g(t) \in M_w$ by assumption.
  Then by the induction hypothesis we can reduce our goal to $\M, w \Vdash^{\tilde{h}} \psi$. By our assumption, it is enough to check that $\tilde{h} \xaltern{z} \tilde{g}$.

  In order to see this, note first that $\tilde{h} \xaltern{\{x, z\}} h$ (because $\tilde{h} \xaltern{x} h$). Similarly, $h \xaltern{\{x, z\}} g$ and $g \xaltern{\{x, z\}} \tilde{g}$. Then $\tilde{h} \xaltern{\{x, z\}} \tilde{g}$ by transitivity of $\xaltern{\{x, z\}}$. But $\tilde{g}(x) = g(t)$ by assumption; $g(t) = h(t)$ because $g \xaltern{z} h$ ($z$ and $t$ are not the same variable because otherwise $t$ would not be free for $x$ in $\varphi$); and $h(t) = \tilde{h}(x)$ by construction of $\tilde{h}$. Thus $\tilde{g}(x) = \tilde{h}(x)$, and $\tilde{h} \xaltern{z} \tilde{g}$.

  Towards the other direction, assume that $\M, w \Vdash^g (\Forall{z} \psi)\subst{x}{t}$ and that $x$ and $z$ are not the same variable. Let $\tilde{h} \xaltern{z} \tilde{g}$ be a $w$-assignment. We wish to show $\M, w \Vdash^{\tilde{h}} \psi$. Define $h \xaltern{x} \tilde{h}$ such that $h(x) := g(x)$. Note that $h \xaltern{z} g$ by the transitivity of ${\xaltern{x, z}}$ (using a similar argument to the one above). Thus we know that $\M, w \Vdash^h \psi\subst{x}{t}$ by assumption. It only remains to show that $\tilde{h}(x) = h(t)$, as we can then finally use the induction hypothesis to finish.
  If $t$ is $x$ there is nothing to show, and $t$ cannot be $z$, because $z$ is not free for $x$ in $\Forall{z} \psi$. Thus, $h(t) = g(t) = \tilde{g}(x) = \tilde{h}(x)$.
\end{proof}

We now wish to provide counterparts to Remark~\ref{rem:assignment:absent_variables} and Lemma~\ref{lem:assignment:formula_substitution} for when the change happens in the interpretation of a constant instead of a variable. They are needed to show the soundness of Rule~\ref{rule:constants}. It is straightforward to check that the interpretation of constants not appearing in a formula is not relevant for the truth of that formula:

\begin{remark}%[Absent constants]
\label{rem:absent_constants}
Let $\M$ and $\M'$ be adequate models differing only in their constant interpretations $\{I_w\}_{w \in W}$ and $\{I'_w\}_{w \in W}$. Let $w$ be any world, $g$ be any $w$-assignment, and $\varphi$ be a formula whose constants are interpreted in the same way by both $\M$ and $\M'$. Then
  \begin{equation*}
    \M, w \Vdash^g \varphi \iff \M', w \Vdash^g \varphi
    .
  \end{equation*}
\end{remark}
\begin{comment}
\begin{proof}
  The $\top$ and conjunction cases are straightforward.

  In the case of relation symbols we assume without loss of generality that $\varphi = S(x, c)$ for some variable $x$ and constant $c$. If $\M, w \Vdash^g S(x, c)$, then $\la g(x), c^{I_w}\ra \in S^{J_w}$. Since $c^{I_w} = c^{I'_w}$, we also have that $\M', w \Vdash S(x, c)$. The other direction is analogous.

  If $\varphi = \Diamond \psi$ and $\M, w \Vdash^g \Diamond \psi$, then there is $v$ such that $wRv$ and $\M, w \Vdash^g \psi$. Since the constants in $\psi$ are the same as in $\Diamond \psi$, we can use the induction hypothesis to see that $\M', v \Vdash^g \psi$, and hence we have $\M', w \Vdash^g \Diamond \psi$. The other direction is analogous.

  Assume now that $\varphi = \Forall{z} \psi$ and that $\M, w \Vdash^g \Forall{z} \psi$. Let $h \xaltern{z} g$ be a $w$-assignment, and we set out to prove $\M', w \Vdash^h \psi$. By the induction hypothesis this is equivalent to $\M, w \Vdash^h \psi$, which follows from our assumption. The other direction is analogous.
\end{proof}
\end{comment}

However, we need a bit of work to be able to state a counterpart of Lemma~\ref{lem:assignment:formula_substitution} for constants. We want to be able to replace the interpretation of a constant by an element of the domain of some world $w$, but this element may not exist in the domains of the worlds below $w$. Thus we need to first get rid of that part of the model and keep only the sub-graph rooted at $w$.

\begin{definition}%[Frame, model restricted to a world]
  Given a frame $\F = \la W, R, \{M_w\}_{w \in W} \ra$ and a world $r \in W$, the \emph{frame restricted at $r$}, written $\F\rest{r} = \la W\rest{r}, R\rest{r}, \{M_w\}_{w \in W\rest{r}} \ra$, is defined as the restriction of $\F$ to the world $r$ and all the worlds accessible from $r$ by $R$. Thus, $W\rest{r} := \{r\} \cup \{w \in W \ | \ rRw\}$, and the relation $R\rest{r}$ is $R$ restricted to $W\rest{r}$.

  If $\M = \la \F, \{I_w\}_{w \in W}, \{J_w\}_{w \in W}\ra$ is a model, then $\M\rest{r}$ is defined as $\la \F\rest{r}, \{I_w\}_{w \in W\rest{r}},\allowbreak \{J_w\}_{w \in W\rest{r}}\ra$.
\end{definition}

\begin{remark}
  If $\F$ is an adequate frame, then so is $\F\rest{r}$ for any $r \in W$. Furthermore, if $\M$ is an adequate model, then so is $\M\rest{r}$.
\end{remark}

\begin{remark}
\label{rem:essential_truncated_model}
  Given an adequate model $\M$ and a world $r \in W$, we have that for any formula $\varphi$, any world $w \in W_r$ and any $w$-assignment $g$:
  \begin{equation*}
    \M, w \Vdash^g \varphi \iff \M\rest{r}, w \Vdash^g \varphi.
  \end{equation*}
\end{remark}

\begin{definition}%[Model with c replaced by d]
  Given an adequate model $\M = \la \F, \{I_w\}_{w \in W}, \{J_w\}_{w \in W} \ra$, a world $r \in W$, a constant $c$, and an element of the domain $d \in M_r$, we define $\M\rest{r}\subst{c}{d} := \la \F\rest{r}, \{I'_w\}_{w \in W\rest{r}}, \{J_w\}_{w \in W\rest{r}} \ra$ such that its frame is $\F$ truncated at $r$, the relational symbols interpretation and the interpretation of all constants except for $c$ coincides with that of $\M\rest{r}$, and the interpretation $c^{I'_w}$ of the constant $c$ is $d$ for every $w \in W\rest{r}$.
\end{definition}

\begin{lemma}
\label{lem:model_substitution}
Given a constant $c$, a formula $\varphi$ where $c$ does not appear, an adequate model $\M$, a world $w$, and a $w$-assignment $g$, we have:
  \begin{equation*}
    \M, w \Vdash^{g} \varphi
    \iff
    \M\rest{w}\subst{c}{g(x)}, w \Vdash^{g} \varphi\subst{x}{c}
    .
  \end{equation*}
\end{lemma}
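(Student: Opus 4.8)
The plan is to prove the equivalence by induction on the structure of $\varphi$, following the template already established for Lemma~\ref{lem:assignment:formula_substitution} and Remark~\ref{rem:absent_constants}, but now tracking both the change of model (from $\M$ to $\M\rest{w}\subst{c}{g(x)}$) and the syntactic substitution $\subst{x}{c}$. First I would dispatch the base cases: for $\top$ the statement is immediate, and for an atomic formula $S(t_0, \dots, t_{n-1})$ it comes down to checking that $g(t_i)$ (computed in $\M$) agrees with the value of $t_i\subst{x}{c}$ computed under $g$ in $\M\rest{w}\subst{c}{g(x)}$. Since $c$ does not appear in $\varphi$, the only term that can change is an occurrence of $x$, which $\subst{x}{c}$ turns into $c$, now interpreted as $g(x)$ in the new model — so the tuples match and the relational interpretation $S^{J_w}$ is untouched. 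The conjunction case is the trivial propagation of the induction hypothesis.

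The interesting cases are $\Diamond$ and $\forall$. For $\varphi = \Diamond \psi$: if $\M, w \Vdash^g \Diamond \psi$ there is $v$ with $wRv$ and $\M, v \Vdash^{g^\iota} \psi$. Here it matters that we restrict at $w$: the world $v$ lies in $W\rest{w}$, and by Remark~\ref{rem:essential_truncated_model} truth at $v$ is unaffected by passing to $\M\rest{w}$. Moreover $g^\iota$, viewed as a $v$-assignment, still satisfies $g^\iota(x) = g(x)$ (the coercion $\iota_{w,v}$ is an inclusion), and in $\M\rest{w}\subst{c}{g(x)}$ the constant $c$ is interpreted as $g(x) = g^\iota(x)$ at \emph{every} world of $W\rest{w}$, in particular at $v$. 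So the induction hypothesis applies at $v$ — note that the hypothesis is stated for arbitrary worlds of the restricted model, so I should phrase the inductive claim for all $u \in W\rest{w}$ and all $u$-assignments $h$ with $h(x) = g(x)$ — and yields $\M\rest{w}\subst{c}{g(x)}, v \Vdash^{g^\iota} \psi\subst{x}{c}$, hence $\M\rest{w}\subst{c}{g(x)}, w \Vdash^g \Diamond\psi\subst{x}{c}$. The converse is symmetric, using that $W\rest{w}$ is closed under $R$ by transitivity and inclusivity of the adequate frame. For $\varphi = \Forall{z} \psi$ (with $z \neq x$, else use Remark~\ref{rem:assignment:absent_variables}): given $h \xaltern{z} g$ one has $h(x) = g(x)$, so the induction hypothesis reduces $\M\rest{w}\subst{c}{g(x)}, w \Vdash^h \psi\subst{x}{c}$ to $\M, w \Vdash^h \psi$, and both quantifiers range over the same domain $M_w$; the $z$-alternatives are the same in both models since the domains are unchanged.

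The main obstacle — really the only subtlety — is bookkeeping: getting the inductive statement right so that it survives the diamond step. The naive formulation "$\M, w \Vdash^g \varphi \iff \M\rest{w}\subst{c}{g(x)}, w \Vdash^g \varphi\subst{x}{c}$" is too weak to feed back into itself at an accessible world $v$, because there the relevant assignment is $g^\iota$ and the relevant restriction point is still $w$, not $v$. The clean fix is to strengthen the claim to: for every adequate model $\M$, every $r \in W$, every $u \in W\rest{r}$, every $u$-assignment $h$, and every formula $\varphi$ not containing $c$, $\M, u \Vdash^h \varphi \iff \M\rest{r}\subst{c}{h(x)}, u \Vdash^h \varphi\subst{x}{c}$ — wait, this still has the wrong value $h(x)$ versus $g(x)$. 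The correct strengthening fixes the interpreting element once and for all: for every $d \in M_r$ and every $u$-assignment $h$ with $h(x) = \iota_{r,u}(d)$ (equivalently $h(x) = d$ under the identification along inclusions), $\M, u \Vdash^h \varphi \iff \M\rest{r}\subst{c}{d}, u \Vdash^h \varphi\subst{x}{c}$. Then the original statement is the case $r = u = w$, $d = g(x)$, $h = g$, and the diamond step goes through with $u := v$, $h := g^\iota$. Once this generalized statement is fixed, every case is routine and the proof is short. I would also remark that in the atomic case, should $\varphi\subst{x}{c}$ introduce $c$ into a predicate argument, Remark~\ref{rem:absent_constants} is not needed — the value is simply read off directly — whereas Remark~\ref{rem:essential_truncated_model} is the one genuine external ingredient, invoked silently at every world to justify the passage between $\M$ and $\M\rest{w}$.
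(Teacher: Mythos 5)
Your proof is essentially correct, but at the one genuinely delicate point --- the diamond case --- you take a different route from the paper. You judge the lemma as stated ``too weak'' to feed into itself at an accessible world $v$ and therefore strengthen the inductive claim, fixing a restriction point $r$ and an element $d\in M_r$ and quantifying over all $u\in W\rest{r}$ and $u$-assignments $h$ with $h(x)=d$; this works (the diamond step stays inside $W\rest{r}$ by transitivity, and $h^\iota(x)=h(x)=d$ is preserved), and the original statement is the instance $r=u=w$, $d=g(x)$. The paper instead keeps the statement untouched: since the lemma quantifies over all adequate models, worlds and assignments, it applies the induction hypothesis at $v$ \emph{with restriction point $v$}, obtaining $\M\rest{v}\subst{c}{g^\iota(x)}, v \Vdash^{g^\iota} \psi\subst{x}{c}$, and then observes that $\M\rest{v}\subst{c}{g^\iota(x)}$ is the same model as $(\M\rest{w}\subst{c}{g(x)})\rest{v}$ (same frame, same constant interpretation because $g(x)=g^\iota(x)$, same relational interpretation), so Remark~\ref{rem:essential_truncated_model} transfers truth back to $\M\rest{w}\subst{c}{g(x)}$ at $v$. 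So your diagnosis that the unstrengthened statement cannot close the induction is not quite right --- the model-identity observation repairs it --- but your strengthened formulation is a legitimate alternative that trades that observation for a more general induction hypothesis applied directly. One small slip: when $z$ coincides with $x$ (or more generally when $x$ is not free in $\varphi$), the relevant facts are Remarks~\ref{rem:absent_constants} and~\ref{rem:essential_truncated_model} (the substitution is vacuous and only the model changes), not Remark~\ref{rem:assignment:absent_variables}, which concerns changing the assignment.
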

\begin{proof}
  We proceed by induction on the formula $\varphi$. The cases of $\top$, relational symbols, and conjunction are trivial. We assume that $x$ is free in $\varphi$, for otherwise we could use Remarks~\ref{rem:absent_constants} and \ref{rem:essential_truncated_model}.

  %Assume that $\M, w \Vdash^{g} S(t_0, \hdots, t_{n-2}, x)$, where none of the $t_i$ are $x$ or $c$. This means that $\la g(t_0), \hdots, g(t_{n-2}), g(x) \ra \in S^{J_w}$. We need to show that $\M_w\subst{c}{g(x)}, w \Vdash^{g} S(t_0, \hdots, t_{n-2}, c)$, i.e., that $\la g(t_0), \hdots,\allowbreak g(t_{n-2}), c^{I'_w} \ra \in S^{J_w}$. But this follows from $c^{I'_w} = g(x)$. The other direction is analogous.

  Consider the diamond case. If $\M, w \Vdash^{g} \Diamond \psi$, then there is a world $v$ such that $wRv$ and $\M, v \Vdash^{g^\iota} \psi$. By the induction hypothesis we obtain $\M\rest{v}\subst{c}{g^\iota(x)}, v \Vdash^{g^\iota} \psi\subst{x}{c}$. Observe that $\M\rest{v}\subst{c}{g^\iota(x)}$ is the same model as $(\M\rest{w}\subst{c}{g(x)})\rest{v}$, since they share the same frame, the same constant interpretation (because $g(x) = g^\iota(x)$) and the same relational symbol interpretation. Then by Remark~\ref{rem:essential_truncated_model} we get $\M\rest{w}\subst{c}{g(x)}, v \Vdash^{g^\iota} \psi\subst{x}{c}$ and consequently $\M\rest{w}\subst{c}{g(x)} \Vdash^g \Diamond \psi\subst{x}{c}$, as desired. The other implication is analogous.

  Finally, let $\varphi = \Forall{z} \psi$ and assume that $\M, w \Vdash^g \Forall{z} \psi$. Let $h \xaltern{z} g$ be a $w$-assignment, and set out to prove $\M\rest{w}\subst{c}{g(x)}, w \Vdash^h \psi\subst{x}{c}$ (note that $z$ and $x$ are not the same variable for otherwise $x$ would not be free in $\varphi$). Since $h \xaltern{z} g$, we know that $g(x) = h(x)$, so by the induction hypothesis it is enough to show $\M, w \Vdash^h \psi$, which follows from our assumption. The other implication is analogous.
\end{proof}

We are finally ready to prove that $\QRC$ is sound with respect to the relational semantics presented above.

\begin{theorem}[Relational soundness]
\label{thm:modal_soundness}
If $\varphi \vdash \psi$, then for any adequate model $\M$, for any world $w \in W$, and for any $w$-assignment $g$:
  \begin{gather*}
    \M, w \Vdash^g \varphi
    \implies
    \M, w \Vdash^g \psi
    .
  \end{gather*}
\end{theorem}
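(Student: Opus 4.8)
The plan is to proceed by induction on the derivation of $\varphi \vdash \psi$ in $\QRC$, showing for each axiom and rule that it preserves truth at every world under every assignment in every adequate model. The Boolean base cases are immediate from the definition of $\Vdash$: $\varphi \vdash \top$ holds because $\top$ is always true, $\varphi \vdash \varphi$ is trivial, and the two conjunction projections $\varphi \land \psi \vdash \varphi$ and $\varphi \land \psi \vdash \psi$ follow directly from the clause for $\land$. The rule for introducing a conjunction on the right and the cut rule (transitivity of $\vdash$) are equally routine: if both $\varphi \vdash \psi$ and $\varphi \vdash \chi$ are sound, then any $\M, w, g$ satisfying $\varphi$ satisfies both $\psi$ and $\chi$, hence $\psi \land \chi$; and composing two sound implications gives a sound implication.

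Next I would handle the modal axioms and rules. For the necessitation rule (Rule~\ref{rule:necessitation}), assume $\M, w \Vdash^g \Diamond \varphi$; then there is $v$ with $wRv$ and $\M, v \Vdash^{g^\iota} \varphi$, so by the induction hypothesis $\M, v \Vdash^{g^\iota} \psi$, witnessing $\M, w \Vdash^g \Diamond \psi$. The transitivity axiom $\Diamond \Diamond \varphi \vdash \Diamond \varphi$ (Rule~\ref{ax:transitivity}) is where transitivity of $R$ is used: from $\M, w \Vdash^g \Diamond \Diamond \varphi$ we get $wRv$ and $\M, v \Vdash^{g^\iota} \Diamond \varphi$, hence $vRv'$ and $\M, v' \Vdash^{g^{\iota\iota}} \varphi$; since $wRv'$ by transitivity, $g^{\iota\iota}$ is (up to the coercions composing correctly) the $v'$-assignment obtained from $g$, so $\M, w \Vdash^g \Diamond \varphi$. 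For the Barcan-style axiom $\Diamond \Forall{x} \varphi \vdash \Forall{x} \Diamond \varphi$, this is where inclusiveness of frames is essential: assume $\M, w \Vdash^g \Diamond \Forall{x} \varphi$, so there is $v$ with $wRv$ and $\M, v \Vdash^{g^\iota} \Forall{x} \varphi$; to check $\M, w \Vdash^g \Forall{x} \Diamond \varphi$, take any $w$-assignment $h \xaltern{x} g$ and note that $h^\iota$ is a $v$-assignment with $h^\iota \xaltern{x} g^\iota$ (since $M_w \subseteq M_v$ the value $h(x) \in M_w$ is available in $M_v$), so $\M, v \Vdash^{h^\iota} \varphi$, which witnesses $\M, w \Vdash^h \Diamond \varphi$.

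The quantifier rules need the substitution lemmas. For $\forall$-introduction on the right (Rule~\ref{rule:forallR}) with $x \notin \fv(\varphi)$: assume $\M, w \Vdash^g \varphi$ and let $h \xaltern{x} g$; since $x$ is not free in $\varphi$, Remark~\ref{rem:assignment:absent_variables} gives $\M, w \Vdash^h \varphi$, so by the induction hypothesis $\M, w \Vdash^h \psi$, and as $h$ was arbitrary, $\M, w \Vdash^g \Forall{x} \psi$. For $\forall$-introduction on the left (Rule~\ref{rule:forallL}) with $t$ free for $x$ in $\varphi$: assume $\M, w \Vdash^g \Forall{x} \varphi$; taking the $x$-alternative $w$-assignment $\tilde{g}$ with $\tilde{g}(x) = g(t)$ we get $\M, w \Vdash^{\tilde{g}} \varphi$, hence by Lemma~\ref{lem:assignment:formula_substitution} $\M, w \Vdash^g \varphi\subst{x}{t}$, and the induction hypothesis on $\varphi\subst{x}{t} \vdash \psi$ gives $\M, w \Vdash^g \psi$. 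For term instantiation (Rule~\ref{rule:term_instantiation}): given soundness of $\varphi \vdash \psi$, suppose $\M, w \Vdash^g \varphi\subst{x}{t}$; let $\tilde{g} \xaltern{x} g$ with $\tilde{g}(x) = g(t)$, so by Lemma~\ref{lem:assignment:formula_substitution} $\M, w \Vdash^{\tilde{g}} \varphi$, hence $\M, w \Vdash^{\tilde{g}} \psi$, and Lemma~\ref{lem:assignment:formula_substitution} again yields $\M, w \Vdash^g \psi\subst{x}{t}$ (using that $t$ is free for $x$ in both $\varphi$ and $\psi$).

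The final and most delicate case is the generalization-on-constants rule (Rule~\ref{rule:constants}): from soundness of $\varphi\subst{x}{c} \vdash \psi\subst{x}{c}$ with $c$ not in $\varphi$ nor $\psi$, we must derive soundness of $\varphi \vdash \psi$. This is the main obstacle, because the hypothesis only tells us about models where $c$ has some interpretation, whereas we need a statement about $\varphi$ and $\psi$ which do not mention $c$ at all; the point is that we get to choose the interpretation of $c$ freely, and the domains below $w$ may not contain the desired witness, forcing a passage to the restricted model. Concretely: fix an adequate $\M$, a world $w$, a $w$-assignment $g$, and assume $\M, w \Vdash^g \varphi$. By Lemma~\ref{lem:model_substitution}, $\M\rest{w}\subst{c}{g(x)}, w \Vdash^g \varphi\subst{x}{c}$. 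Since $\M\rest{w}\subst{c}{g(x)}$ is an adequate model, the induction hypothesis gives $\M\rest{w}\subst{c}{g(x)}, w \Vdash^g \psi\subst{x}{c}$, and applying Lemma~\ref{lem:model_substitution} in the other direction yields $\M, w \Vdash^g \psi$, as required. All other rules (the ones not discussed, essentially interchanging quantifiers or trivial reindexing) are handled the same way, via the substitution lemmas and the remarks. This completes the induction.
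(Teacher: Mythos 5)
Your proposal is correct and follows essentially the same route as the paper: induction on the derivation, using Remark~\ref{rem:assignment:absent_variables} for $\forall$-introduction on the right, Lemma~\ref{lem:assignment:formula_substitution} for the left $\forall$-rule and term instantiation, adequacy (transitivity, inclusiveness) for the modal axioms, and Lemma~\ref{lem:model_substitution} with the restricted model $\M\rest{w}\subst{c}{g(x)}$ for the generalization-on-constants rule. No gaps worth noting.
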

\begin{proof}
  By induction on the proof of $\varphi \vdash \psi$.

  In the case of the axioms $\varphi \vdash \top$ and $\varphi \vdash \varphi$, the result is clear, as it is for the conjunction elimination axioms.
  The conjunction introduction and cut rules follow easily from the definitions.

  For the necessitation rule assume the result for $\varphi \vdash \psi$ and further assume that $\M, w \Vdash^g \Diamond \varphi$. Then there is a world $v$ such that $wRv$ and $\M, v \Vdash^{g^\iota} \varphi$. We wish to see $\M, w \Vdash^g \Diamond \psi$. Taking $v$ as a suitable witness, our goal changes to $\M, v \Vdash^{g^\iota} \psi$. Thus the induction hypothesis for $v$ and $g^\iota$ finishes the proof.

  For the transitivity axiom, $\Diamond \Diamond \varphi \vdash \Diamond \varphi$, assume that $\M, w \Vdash^g \Diamond \Diamond \varphi$. Then there is a world $v$ such that $wRv$ and $\M, v \Vdash^{\iota_{w, v} \circ g} \Diamond \varphi$, and also a subsequent world $u$ such that $vRu$ and $\M, u \Vdash^{\iota_{v, u} \circ (\iota_{w, v} \circ g)} \varphi$. Observing that $\iota_{v, u} \circ (\iota_{w, v} \circ g)$ is the same as $\iota_{w, u} \circ g$, we get $\M, u \Vdash^{\iota_{w, u} \circ g} \psi$, and the transitivity of $R$ provides $wRu$, which is enough to see $\M, w \Vdash^{g} \Diamond \varphi$, as desired.

  In the case of $\Diamond \Forall{x} \varphi \vdash \Forall{x} \Diamond \varphi$, assume that $\M, w \Vdash^g \Diamond \Forall{x} \varphi$. Then there is $v \in W$ such that $wRv$ and for every $v$-assignment $h$ with $h \xaltern{x} g^\iota$ we have $\M, v \Vdash^{h} \varphi$. Let $f$ be any $w$-assignment such that $f \xaltern{x} g$. Taking $v$ as a suitable world seen by $w$, we wish to check that $\M, v \Vdash^{f^\iota} \varphi$. By assumption, it is enough to see $f^\iota \xaltern{x} g^\iota$, and this follows from $f \xaltern{x} g$.

  For the $\forall$-introduction rule on the right, assume the result for $\varphi \vdash \psi\subst{x}{y}$ with $x \not\in \fv(\varphi)$ and $y$ free for $x$ in $\psi$. Assume further that $\M, w \Vdash^g \varphi$. Let $h$ be a $w$-assignment such that $h \xaltern{x} g$. We wish to see that $\M, w \Vdash^h \psi$. Since $x$ is not a free variable in $\varphi$, we know that $\M, w \Vdash^h \varphi$ by Remark~\ref{rem:assignment:absent_variables}. The result follows from the induction hypothesis with $w$-assignment $h$.

  Consider now the $\forall$-introduction rule on the left. Assume the result for $\varphi\subst{x}{t} \vdash \psi$ with $t$ free for $x$ in $\varphi$ and assume further that $\M, w \Vdash^g \Forall{x} \varphi$. Then for every $w$-assignment $h$ such that $h \xaltern{x} g$ we have $\M, w \Vdash^{h} \varphi$. Define $h \xaltern{x} g$ such that $h(x) = g(t)$. We obtain $\M, w \Vdash^g \psi$ by the induction hypothesis and Lemma~\ref{lem:assignment:formula_substitution}.

  The term instantiation rule, that if $\varphi \vdash \psi$ then $\varphi\subst{x}{t} \vdash \psi\subst{x}{t}$ with $t$ free for $x$ in $\varphi$ and in $\psi$, is sound by Lemma~\ref{lem:assignment:formula_substitution}.

  Finally, consider the generalization on constants rule and assume the result for $\varphi\subst{x}{c} \vdash \psi\subst{x}{c}$, where $c$ does not appear in $\varphi$ nor in $\psi$. Assume further that $\M, w \Vdash^g \varphi$. By Lemma~\ref{lem:model_substitution} we know that $\M\rest{w}\subst{c}{g(x)}, w \Vdash^g \varphi\subst{x}{c}$, and thus by the induction hypothesis that $\M\rest{w}\subst{c}{g(x)}, w \Vdash^g \psi\subst{x}{c}$. This allows us to conclude $\M, w \Vdash^g \psi$ by the same lemma.
\end{proof}

\section{Relational completeness}

We now wish to prove the relational completeness of $\QRC$. For every underivable sequent we provide a model that doesn't satisfy it. These models are term models where the worlds are akin to maximal consistent sets. However, since we have no way to express negative formulas, each world is a pair of sets of formulas instead: the set of positive formulas at that world and the set of negative ones.

We start by defining some notions about pairs of formulas, and we write $p, q, \hdots$ to refer to generic pairs that may not have all the necessary properties to be a world in a term model. Given a pair of sets $p$, the first set is the positive set, or $p^+$, and the second one is the negative set, or $p^-$.

\begin{definition}%[Entailment from a set]
  Given a set of formulas $\Gamma$ and a formula $\varphi$, we say that $\varphi$ follows from $\Gamma$, and write $\Gamma \vdash \varphi$, if there are formulas $\gamma_0, \hdots, \gamma_{n} \in \Gamma$ such that $\gamma_0 \land \cdots \land \gamma_{n} \vdash \varphi$.
\end{definition}

\begin{definition}%[$\Phi$-extension of a pair, consistent pair, $\Phi$-maximal consistent pair, fully witnessed pair, $\Phi$-MCW]
  Let $\Phi$ be a set of formulas.
  \begin{itemize}
    \item A $\Phi$-extension of a pair $p = \pair{p^+}{p^-}$ is a pair $q = \pair{q^+}{q^-}$ such that $p^+ \subseteq q^+ \subseteq \Phi$ and $p^- \subseteq q^- \subseteq \Phi$. In that case we write $p \subseteq q \subseteq \Phi$.

    \item A pair $p$ is consistent if for every $\delta \in p^-$  we have $p^+ \not\vdash \delta$.

    \item A pair $p \subseteq \Phi$ is $\Phi$-maximal consistent if it is consistent and there is no consistent $\Phi$-extension of $p$.

    \item A pair $p$ is fully witnessed if for every formula $\Forall{x} \varphi \in p^-$ there is a constant $c$ such that $\varphi\subst{x}{c} \in p^-$.

    \item A pair $p$ is $\Phi$-MCW if it is $\Phi$-maximal consistent and fully witnessed.
  \end{itemize}
\end{definition}

\begin{lemma}
\label{lem:maximal_consistentP}
  A pair $p$ is $\Phi$-maximal consistent if and only if it is consistent and for every $\varphi \in \Phi$ either $\varphi \in p^+$ or $\varphi \in p^-$.
\end{lemma}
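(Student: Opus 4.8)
The plan is to prove both directions directly, the only non-trivial ingredient being a small ``deduction-style'' observation about the relation $\Gamma \vdash \varphi$.

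First I would record the auxiliary fact: if $\Gamma \vdash \varphi$ and $\Gamma \cup \{\varphi\} \vdash \delta$, then $\Gamma \vdash \delta$. This follows by unwinding the definition of $\Gamma \vdash (\cdot)$: pick $\gamma_0, \dots, \gamma_n \in \Gamma$ with $\gamma_0 \land \cdots \land \gamma_n \vdash \varphi$, and $\eta_0, \dots, \eta_m \in \Gamma \cup \{\varphi\}$ with $\eta_0 \land \cdots \land \eta_m \vdash \delta$; if $\varphi$ is not among the $\eta_j$ we are done immediately, and otherwise we use conjunction elimination (axiom (ii)), conjunction introduction (rule (iii)), and cut (Rule~\ref{rule:cut}) to replace the occurrence of $\varphi$ by the conjunction $\gamma_0 \land \cdots \land \gamma_n$, obtaining a conjunction of formulas all lying in $\Gamma$ that proves $\delta$.

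For the right-to-left direction, suppose $p$ is consistent and every $\varphi \in \Phi$ lies in $p^+$ or in $p^-$. If $q$ were a consistent $\Phi$-extension of $p$ with, say, some $\varphi \in q^+ \setminus p^+$, then since $\varphi \in q^+ \subseteq \Phi$ and $\varphi \notin p^+$ we would have $\varphi \in p^- \subseteq q^-$; but $\varphi \in q^+$ gives $q^+ \vdash \varphi$, contradicting consistency of $q$. The case $\varphi \in q^- \setminus p^-$ is symmetric. Hence $p$ has no proper consistent $\Phi$-extension, i.e.\ it is $\Phi$-maximal consistent.

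For the left-to-right direction, assume $p$ is $\Phi$-maximal consistent (so in particular consistent) and suppose towards a contradiction that some $\varphi \in \Phi$ lies in neither $p^+$ nor $p^-$. Consider the two candidate proper extensions $q = \pair{p^+ \cup \{\varphi\}}{p^-}$ and $q' = \pair{p^+}{p^- \cup \{\varphi\}}$. If $q$ is consistent we contradict maximality. Otherwise $p^+ \cup \{\varphi\} \vdash \delta$ for some $\delta \in p^-$; I then claim $q'$ is consistent. Indeed, consistency of $q'$ can only fail through $\delta' = \varphi$, i.e.\ if $p^+ \vdash \varphi$ — but then the auxiliary fact gives $p^+ \vdash \delta$ with $\delta \in p^-$, contradicting consistency of $p$. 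So $q'$ is a proper consistent $\Phi$-extension, again contradicting maximality. Therefore every $\varphi \in \Phi$ belongs to $p^+$ or $p^-$, as desired. The main (and really only) obstacle is getting the auxiliary deduction fact right, since all the ``logical'' manipulation in the argument is funnelled through it.
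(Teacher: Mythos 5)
Your proof is correct and follows essentially the same route as the paper: for a formula $\varphi \in \Phi$ outside both components, one of the two one-formula extensions $\pair{p^+ \cup \{\varphi\}}{p^-}$ or $\pair{p^+}{p^- \cup \{\varphi\}}$ is consistent, contradicting maximality. The only difference is cosmetic — you argue by contradiction and make explicit the cut-transfer fact (if $\Gamma \vdash \varphi$ and $\Gamma \cup \{\varphi\} \vdash \delta$ then $\Gamma \vdash \delta$) that the paper's proof uses implicitly when asserting that $\pair{p^+ \cup \{\varphi\}}{p^-}$ ``is still consistent''.
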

\begin{proof}
  The right-to-left implication is obvious. To check the other one assume that $p$ is $\Phi$-maximal consistent and let $\varphi \in \Phi$. If $p^+ \vdash \varphi$, then $\pair{p^+ \cup \{\varphi\}}{p^-}$ is still consistent, and thus by maximality it must be that $\varphi \in p^+$. If on the other hand $p^+ \not\vdash \varphi$, then $\pair{p^+}{p^- \cup \{\varphi\}}$ is consistent, and thus we may conclude $\varphi \in p^-$.
\end{proof}

%\begin{corollary}
%\label{cor:pair_consequence}
%  Given a $\Phi$-maximal consistent pair $p$ and $\varphi \in \Phi$, if $p^+ \vdash \varphi$, then $\varphi \in p^+$.
%\end{corollary}

\begin{definition}%[$\Cl_C$]
  Given a set of constants $C$, the closure of a formula $\varphi$ under $C$, written $\Cl_C(\varphi)$, is defined by induction on the formula as such: $\Cl_C(\top) := \{\top\}$; $\Cl_C(S(t_0, \hdots, t_{n-1}) := \{S(t_0, \hdots, t_{n-1})), \top\}$; $\Cl_C(\varphi \land \psi) := \{\varphi \land \psi\} \cup \Cl_C(\varphi) \cup \Cl_C(\psi)$; $\Cl_C(\Diamond \varphi) := \{\Diamond \varphi\} \cup \Cl_C(\varphi)$; and
  \begin{equation*}
    \Cl_C(\Forall{x} \varphi) :=
      \{\Forall{x} \varphi\}
      \cup
      \bigcup_{c \in C} \Cl_C(\varphi\subst{x}{c})
    .
  \end{equation*}

  The closure under $C$ of a set of formulas $\Gamma$ is the union of the closures under $C$ of each of the formulas in $\Gamma$:
  \begin{equation*}
    \Cl_C(\Gamma) := \bigcup_{\gamma \in \Gamma} \Cl_C(\gamma).
  \end{equation*}
  The closure of a pair $p$ is defined as the closure of $p^+ \cup p^-$.
\end{definition}

  Note that the closure of a set of closed formulas is itself a set of closed formulas.
We often use the concept of closure under a set of constants on an already $\Phi$-maximal pair when we wish to extend the signature of the formulas in $\Phi$ with a new set of constants.

Given a consistent pair $p$, we wish to generate a $\Phi$-maximal consistent and fully witnessed extension of $p$, for some set of formulas $\Phi$. In the usual Henkin construction this is traditionally accomplished in two steps: first extend the signature to include a constant for each existential statement and add every closed formula of the form $\exists{x} \varphi \to \varphi\subst{x}{c_\varphi}$ to your set, proving that this didn't break consistency. Then prove a Lindenbaum lemma to the effect that consistent sets can be extended to maximal consistent sets. The resulting sets will be maximal, consistent, and fully witnessed. However we can not do this because we cannot express implications. Thus if we were to add a witness for every existential formula in our original pair $p$ (read: universal formula in $p^-$) and then use a Lindenbaum lemma to make it maximal, there could be new existential formulas without witnesses. We might have to iterate the process over and over again, or at least a proof of termination would be non-trivial.
Fortunately, this isn't needed. We can manage with a finite set of witnesses, as is shown by the following lemma.

\begin{lemma}%[Lindenbaum]
\label{lem:lindenbaum}
Given a finite signature $\Sigma$ with constants $C$, a finite set of closed formulas $\Phi$ in the language of $\Sigma$ and a consistent pair $p \subseteq \Cl_{C}(\Phi)$, there is a finite set of constants $D \supseteq C$ and a pair $q \supseteq p$ in the language of $\Sigma$ extended by $D$ such that $q$ is $\Cl_{D}(\Phi)$-MCW and $\mdepth(q^+) = \mdepth(p^+)$.
\end{lemma}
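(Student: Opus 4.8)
The construction rests on two elementary facts, which I would isolate first. \emph{(Depth-preserving maximalisation.)} If $r \subseteq \Cl_D(\Phi)$ is consistent, it extends to a $\Cl_D(\Phi)$-maximal consistent pair $r'$ with $r^+ \subseteq r'^+$ and $\mdepth(r'^+) = \mdepth(r^+)$: run through $\Cl_D(\Phi)$ once and, for each $\alpha$, put $\alpha$ in the positive part if it is derivable from the current positive part and in the negative part otherwise, appealing to Lemma~\ref{lem:maximal_consistentP} for maximality. Consistency survives because enlarging the positive part by its own $\vdash$-consequences does not enlarge the set of derivable formulas, and modal depth survives because $r^+ \vdash \alpha$ forces $\mdepth(\alpha) \le \mdepth(r^+)$ by Lemma~\ref{lem:mdepth} (one keeps $\top$ in the positive part so that $\vdash$ behaves). \emph{(Fresh witnesses.)} If $r$ is a consistent pair of closed formulas, $\Forall{x}\varphi \in r^-$, and $c$ occurs neither in $\varphi$ nor in any member of $r^+$, then $\pair{r^+}{r^- \cup \{\varphi\subst{x}{c}\}}$ is still consistent: otherwise $r^+ \vdash \varphi\subst{x}{c}$, so for closed $\gamma_0, \dots, \gamma_k \in r^+$ we would have $\gamma_0 \land \cdots \land \gamma_k \vdash \varphi\subst{x}{c}$, and Lemma~\ref{item:constants_forall} would lift this to $\gamma_0 \land \cdots \land \gamma_k \vdash \Forall{x}\varphi$, contradicting consistency.

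The plan is to induct on $n := \udepth(\Phi)$. If $n = 0$ there are no universal formulas in $\Cl_C(\Phi)$, so a maximalisation of $p$ is already fully witnessed and $D := C$ works. For $n \ge 1$: first maximalise $p$ over $\Cl_C(\Phi)$; then, for each $\Forall{x}\varphi$ of quantifier depth exactly $n$ that ended up in the negative part, introduce a fresh constant $c$ and add the witness $\varphi\subst{x}{c}$ to the negative part. By the second fact this preserves consistency, and every witness so introduced has quantifier depth at most $n-1$. The key point is that enlarging the signature by new constants \emph{cannot} create new formulas of quantifier depth $n$ in the closure of $\Phi$, since a new constant enters the closure only by instantiating a universal formula and that strictly lowers its quantifier depth; hence the depth-$n$ layer of the closure is now decided once and for all, and its negative universal members are witnessed. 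What remains — deciding and witnessing the depth-$<n$ part of the closure — lives inside the closure of a finite set of closed formulas of quantifier depth $\le n-1$, so the induction hypothesis applies and, after re-attaching the frozen depth-$n$ layer, produces the desired $\Cl_D(\Phi)$-MCW pair $q \supseteq p$; the modal depth of the positive part is unchanged throughout because the positive part only ever absorbs $\vdash$-consequences of $p^+$.

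The delicate point, and the one I would treat with the most care, is the bookkeeping that keeps the set $D$ finite: instantiating a depth-$n$ universal formula with a constant introduced only later regenerates, inside the closure, formulas of quantifier depth exactly $n-1$, so the naive loop ``maximalise, then witness'' need not terminate at a fixed level. The way around this is precisely the ``freeze the top layer and recurse'' structure above — one must, however, choose the finite set of depth-$\le n-1$ formulas fed to the induction hypothesis so that its closure really captures every depth-$\le n-1$ formula of $\Cl_D(\Phi)$, for instance by using Lemma~\ref{lem:signatures} and the symmetry of constant renaming to identify the instances built from the freshly added constants with the witnesses already present. Once this is set up, the remaining verifications — that the frozen layer and the recursively produced pair do not clash (they are disjoint by quantifier depth), and that consistency, full witnessedness, $p \subseteq q$, and $\mdepth(q^+) = \mdepth(p^+)$ all survive the gluing — are routine.
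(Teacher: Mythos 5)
Your two preliminary facts are exactly the right ingredients (they are the two halves of the paper's own argument: a one-pass Lindenbaum construction deciding each formula by whether $p^+$ derives it, depth-preserved via Lemma~\ref{lem:mdepth}, and the observation that a constant fresh for $p^+$ and for $\psi$ turns $p^+ \vdash \psi\subst{x}{c}$ into $p^+ \vdash \Forall{x}\psi$ via Lemma~\ref{item:constants_forall}). But the inductive ``freeze the top layer and recurse on $\udepth$'' superstructure has a genuine gap, and it sits precisely at the point you flag as delicate. The lemma demands that $q$ be maximal consistent and fully witnessed with respect to $\Cl_D(\Phi)$ for the \emph{final} constant set $D$, and $\Cl_D(\Phi)$ contains the instances $\varphi\subst{x}{d}$ of every frozen depth-$n$ universal formula $\Forall{x}\varphi$ for every constant $d$ born inside the recursive call. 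These instances are handled neither by the frozen layer nor by the recursion (they do not lie in the closure of any set $\Phi'$ of depth $\le n-1$ formulas fixed before the recursive call, since the constants $d$ do not yet exist when $\Phi'$ is chosen). Your proposed fix --- identifying them with already-present witnesses ``by Lemma~\ref{lem:signatures} and the symmetry of constant renaming'' --- does not close the gap: $\Cl_D(\Phi)$-maximality and full witnessedness are literal membership conditions on $q^+ \cup q^-$, not conditions up to renaming, so these new instances must themselves be decided and, when universal and negative, witnessed; doing so adds constants, which regenerate depth-$(n-1)$ instances of the frozen layer, and you are back in the regress you set out to avoid. As written, the induction neither terminates nor produces a pair that is MCW for the right closure.

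The paper escapes this without any recursion. It fixes in advance a pool $N$ of exactly $\udepth(\Phi)$ fresh constants, sets $D := C \cup N$, and runs your ``depth-preserving maximalisation'' once over the finite set $\Cl_D(\Phi)$, placing $\chi$ in $q^+$ iff $p^+ \vdash \chi$. Full witnessedness then comes from a counting argument rather than from explicitly adding witnesses: a formula of $\Cl_D(\Phi)$ containing $m$ constants of $N$ must come from a formula of $\Cl_C(\Phi)$ by stripping $m$ universal quantifiers, so no $\Forall{x}\psi \in \Cl_D(\Phi)$ can contain all $\udepth(\Phi)$ constants of $N$; picking a constant $c_i \in N$ absent from $\Forall{x}\psi$ (and automatically absent from $p^+$), the instance $\psi\subst{x}{c_i}$ is already in $\Cl_D(\Phi)$ and must have been placed in $q^-$, since $p^+ \vdash \psi\subst{x}{c_i}$ would give $p^+ \vdash \Forall{x}\psi$ by Lemma~\ref{item:constants_forall}, contradicting $\Forall{x}\psi \in q^-$. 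In other words, witnesses are \emph{reused} from a pre-fixed finite pool and found inside the one-pass construction, which is what makes $D$ finite for free; if you want to salvage your approach, this is the idea you would have to import.
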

\begin{proof}
  Let $N := \{c_0, \hdots, c_{\udepth(\Phi)-1}\}$ and $D := C \cup N$.

  Let $q_0 := p$. For every formula $\varphi_i$ in $\Cl_D(\Phi)$, if $p^+ \vdash \varphi_i$, define $q_{i+1} = \pair{q^+_i \cup \{\varphi_i\}}{q^-_i}$; otherwise define $q_{i+1} = \pair{q^+_i}{q^-_i \cup \{\varphi_i\}}$. Let $q := q_n$, where $n$ is the size of $\Cl_D(\Phi)$, i.e., $q$ is what we have at the final iteration of this process.

  Now assume by way of contradiction that $q$ is not consistent, and let $\psi \in q^-$ be such that $q^+ \vdash \psi$. Note that for every $\chi \in q^+$ we know that $p^+ \vdash \chi$, because this was the required condition to add $\chi$ to $q^+$ in the first place. Thus, it must be that $p^+ \vdash \psi$. But then the algorithm would have placed $\psi$ in $q^+$ instead of $q^-$ and we reach a contradiction. We conclude that $q$ is consistent.

  Lemma~\ref{lem:maximal_consistentP} tells us that $q$ is $\Cl_D(\Phi)$-maximal consistent, because every formula of $\Cl_D(\Phi)$ is either in $q^+$ or $q^-$.

  On the other hand, we know by Lemma~\ref{lem:mdepth} that $\mdepth(q^+) \leq \mdepth(p^+)$ because every formula in $q^+$ is a consequence of $p^+$. We obtain the equality by observing that $p^+ \subseteq q^+$.

  It remains to show that $q$ is fully witnessed. Let $\Forall{x} \psi$ be a formula in $q^-$. We claim that there is $c_i \in N$ such that $c_i$ does not appear in $\Forall{x} \psi$. The constants in $N$ are new, so the only way to have a formula $\chi \in \Cl_D(\Phi)$ with a constant $c_j \in N$ is if the formula $\Forall{y} \chi\subst{c_j}{y}$ %\footnote{More formally, we should say that there is a formula $\Forall{y} \delta \in \Cl_D(\Phi)$ such that $y$ is free in $\delta$, $c_j$ does not appear in $\delta$, and $\chi = \delta\subst{y}{c_j}$.}
  is also in $\Cl_D(\Phi)$, for some variable $y$ that does not appear (free) in $\chi$. Assume then that all the constants in $N$ appear in $\psi$. Then the formula $\Forall{y_0} \cdots \Forall{y_{m-1}} \Forall{x} \psi\subst{c_{s_{m-1}}}{y_{m-1}} \cdots \subst{c_{s_0}}{y_0}$ must be in $\Cl_D(\Phi)$ for some variables $y_i$ and permutation $s$ of the numbers between $0$ and $m-1$. But this formula has quantifier depth $m+1$, which is a contradiction because the closure under any set of constants doesn't change the depth of a set of formulas.

  Let then $\Forall{x} \psi \in q^-$ and $c_i \in N$ be a constant that does not appear in $\Forall{x} \psi$. Then we claim that $\psi\subst{x}{c_i} \in q^-$. Assume it is not the case. Then it must be that $p^+ \vdash \psi\subst{x}{c_i}$. Note that $c_i$ does not appear in $p^+$ and that $x$ is not a free variable of $p^+$ due to it being a set of closed formulas. Then by Lemma~\ref{item:constants_forall} we obtain that $p^+ \vdash \Forall{x} \psi$, which is a contradiction.
\end{proof}

The next step is to link maximal consistent and fully witnessed pairs together through a relation that respects the diamond formulas in the pair. To that end we define $\hat{R}$ and prove some properties about it.

\begin{definition}%[$p\hat{R}q$]
  The relation $\hat{R}$ between pairs is such that $p\hat{R}q$ if and only if both of following hold:
  \begin{enumerate}[label=\upshape(\roman*)]
    \item for any formula $\Diamond \varphi \in p^-$ we have $\varphi, \Diamond \varphi \in q^-$; and
    \item there is some formula $\Diamond \psi \in p^+ \cap q^-$.
  \end{enumerate}
\end{definition}

\begin{lemma}
\label{lem:hatR-transitive-irreflexive}
  The relation $\hat{R}$ restricted to consistent pairs is transitive and irreflexive.
\end{lemma}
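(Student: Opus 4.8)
The plan is to verify directly the two defining clauses of $\hat{R}$ for each of the two properties, working throughout with pairs that are assumed consistent.

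For irreflexivity, I would suppose towards a contradiction that $p\hat{R}p$ for some consistent pair $p$. Clause~(ii) of the definition of $\hat{R}$ then hands us a formula $\Diamond\psi$ with $\Diamond\psi \in p^+$ and $\Diamond\psi \in p^-$. Since $\Diamond\psi \in p^+$ we have $p^+ \vdash \Diamond\psi$ (using the axiom $\varphi \vdash \varphi$ together with the definition of $\Gamma \vdash \varphi$), and since $\Diamond\psi \in p^-$ this contradicts the consistency of $p$. Hence no consistent pair is $\hat{R}$-related to itself. Note that clause~(i) plays no role here.

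For transitivity, I would assume $p\hat{R}q$ and $q\hat{R}r$ with $p$, $q$, $r$ consistent, and check the two clauses for $p\hat{R}r$. For clause~(i), take any $\Diamond\varphi \in p^-$; from $p\hat{R}q$ we obtain both $\varphi \in q^-$ and $\Diamond\varphi \in q^-$, and then feeding $\Diamond\varphi \in q^-$ into clause~(i) of $q\hat{R}r$ yields $\varphi \in r^-$ and $\Diamond\varphi \in r^-$, as required. For clause~(ii), clause~(ii) of $p\hat{R}q$ gives some $\Diamond\psi \in p^+ \cap q^-$; since $\Diamond\psi \in q^-$, clause~(i) of $q\hat{R}r$ gives in particular $\Diamond\psi \in r^-$, so $\Diamond\psi \in p^+ \cap r^-$ witnesses clause~(ii) for $p\hat{R}r$.

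The argument is essentially a bookkeeping exercise, so there is no serious obstacle; the one point worth flagging is that it is crucial that clause~(i) of the definition of $\hat{R}$ demands not merely $\varphi \in q^-$ but also $\Diamond\varphi \in q^-$. It is this extra conjunct — a syntactic reflection of the transitivity axiom $\Diamond\Diamond\varphi \vdash \Diamond\varphi$ — that simultaneously keeps clause~(i) closed under composition and lets the witness from clause~(ii) be transported along the composed step. It is also worth noting that consistency is used only for irreflexivity; the transitivity half of the lemma needs none of it.
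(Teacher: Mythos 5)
Your proof is correct and is essentially the same as the paper's: transitivity by chasing both clauses through the intermediate pair (using that clause~(i) forces $\Diamond\varphi$ itself into the negative set), and irreflexivity from clause~(ii) producing $\Diamond\psi \in p^+ \cap p^-$, contradicting consistency. Your added observations (the role of the extra conjunct in clause~(i), and that consistency is only needed for irreflexivity) are accurate but do not change the argument.
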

\begin{proof}
  In order to see that $\hat{R}$ is transitive, assume that $p\hat{R}q\hat{R}r$. We wish to see that $p\hat{R}r$. Let $\Diamond \varphi \in p^-$ be arbitrary. Then $\Diamond \varphi \in q^-$ because $p\hat{R}q$, and then $\varphi, \Diamond \varphi \in r^-$ because $q\hat{R}r$.
  Let now $\Diamond \psi \in {p^+ \cap q^-}$. Since $q\hat{R}r$ we know that $\Diamond \psi \in r^-$. Then $\Diamond \psi \in p^+ \cap r^-$.

  Regarding irreflexivity, suppose that there is a pair $p$ such that $p\hat{R}p$. Then there must be $\Diamond \psi \in p^+ \cap p^-$, which contradicts the consistency of $p$.
\end{proof}

There is an equivalent formulation of $\hat{R}$ by looking at the positive sets.

\begin{lemma}
\label{lem:R+}
Given a set of formulas $\Phi$, two sets of constants $C \subseteq D$, and pairs $p, q$ such that $p$ is $\Cl_C(\Phi)$-maximal consistent and $q$ is $\Cl_D(\Phi)$-maximal consistent, we have that $p\hat{R}q$ if and only if both of the following hold:
    \begin{enumerate}[label=\upshape(\roman*)]
      \item for every formula $\Diamond \varphi \in \Cl_C(\Phi)$, if either $\varphi \in q^+$ or $\Diamond \varphi \in q^+$, then $\Diamond \varphi \in p^+$; and
      \item there is some formula $\Diamond \psi \in p^+ \cap q^-$.
    \end{enumerate}
\end{lemma}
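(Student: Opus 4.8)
The plan is to notice first that condition~(ii) of the lemma is literally the same as the second clause in the definition of $\hat{R}$, so the entire content to be established is the equivalence of clause~(i) of the definition of $\hat{R}$ with condition~(i) of the lemma. Both directions are short arguments resting on three ingredients: Lemma~\ref{lem:maximal_consistentP} (which, for a maximal consistent pair, lets us pass from ``$\chi \notin p^+$'' to ``$\chi \in p^-$'' for $\chi$ in the relevant closure, and dually); the consistency of $p$ and of $q$ (which forbids any formula from lying simultaneously in the positive and the negative part of a consistent pair, using that $\chi \vdash \chi$); and two elementary structural facts about closures, namely that $\Diamond\varphi \in \Cl_C(\Phi)$ implies $\varphi \in \Cl_C(\Phi)$ (closures are closed under taking the immediate subformula of a diamond), and that $C \subseteq D$ implies $\Cl_C(\Phi) \subseteq \Cl_D(\Phi)$ (the only clause of the definition of closure mentioning the constant set, the one for $\forall$, is monotone in it). Together these guarantee that whenever we encounter a formula $\Diamond\varphi \in \Cl_C(\Phi)$, both $\Diamond\varphi$ and $\varphi$ lie in $\Cl_C(\Phi) \subseteq \Cl_D(\Phi)$, so that maximal consistency of $p$ decides $\Diamond\varphi$ and maximal consistency of $q$ decides both $\varphi$ and $\Diamond\varphi$.

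For the direction from $\hat{R}$ to the lemma's condition~(i), I would assume clause~(i) of the definition and take any $\Diamond\varphi \in \Cl_C(\Phi)$ with $\varphi \in q^+$ or $\Diamond\varphi \in q^+$, aiming to conclude $\Diamond\varphi \in p^+$. If this failed, maximal consistency of $p$ would put $\Diamond\varphi \in p^-$, and then clause~(i) of the definition would give $\varphi, \Diamond\varphi \in q^-$; but consistency of $q$ then rules out both $\varphi \in q^+$ and $\Diamond\varphi \in q^+$, contradicting the choice of $\Diamond\varphi$. Hence $\Diamond\varphi \in p^+$, which is exactly the lemma's condition~(i).

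For the converse, I would assume the lemma's condition~(i) and take any $\Diamond\varphi \in p^-$, aiming at $\varphi, \Diamond\varphi \in q^-$. Since $p$ is $\Cl_C(\Phi)$-maximal consistent we have $p^- \subseteq \Cl_C(\Phi)$, so $\Diamond\varphi \in \Cl_C(\Phi)$, and consistency of $p$ gives $\Diamond\varphi \notin p^+$. The contrapositive of the lemma's condition~(i) then yields $\varphi \notin q^+$ and $\Diamond\varphi \notin q^+$. Since $\varphi, \Diamond\varphi \in \Cl_C(\Phi) \subseteq \Cl_D(\Phi)$ and $q$ is $\Cl_D(\Phi)$-maximal consistent, Lemma~\ref{lem:maximal_consistentP} forces $\varphi \in q^-$ and $\Diamond\varphi \in q^-$, which is precisely clause~(i) of the definition of $\hat{R}$. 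As clause~(ii) transfers verbatim, the equivalence is complete.

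There is no deep obstacle here; the only thing requiring care is the bookkeeping between the two closure sets $\Cl_C(\Phi)$ and $\Cl_D(\Phi)$, making sure that each formula to which Lemma~\ref{lem:maximal_consistentP} is applied genuinely lies in the closure for which the relevant pair is maximal consistent. I would either fold the two monotonicity/subformula facts about closures directly into this proof or record them beforehand as a small remark, after which the argument above is essentially mechanical.
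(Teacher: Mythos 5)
Your proposal is correct and follows essentially the same route as the paper's proof: condition (ii) transfers verbatim, and both directions of the equivalence between clause (i) of the definition of $\hat{R}$ and condition (i) of the lemma are handled via Lemma~\ref{lem:maximal_consistentP} together with consistency of $p$ and $q$, exactly as in the paper (your converse direction is just the paper's contradiction argument phrased contrapositively). The closure bookkeeping you spell out (subformulas of diamonds staying in the closure, and monotonicity of $\Cl_C$ in $C$) is left implicit in the paper but is the same underlying justification.
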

\begin{proof}
  Assume that $p\hat{R}q$ and let $\Diamond \varphi \in \Cl_C(\Phi)$ be such that either $\varphi \in q^+$ or $\Diamond \varphi \in q^+$. Assume by contradiction that $\Diamond \varphi \notin p^+$. Then by Lemma~\ref{lem:maximal_consistentP} we know that $\Diamond \varphi \in p^-$. Thus since $p\hat{R}q$, we obtain both $\varphi \in q^-$ and $\Diamond \varphi \in q^-$. But this contradicts the consistency of $q$. 
  The last condition holds by the definition of $\hat{R}$.

  Assume now that these conditions hold, towards checking that $p\hat{R}q$. Only the first condition is in question. Let $\Diamond \varphi \in p^-$ and assume that $\varphi \notin q^-$. By Lemma~\ref{lem:maximal_consistentP}, it must be that $\varphi \in q^+$. Then $\Diamond \varphi \in p^+$, which contradicts the consistency of $p$. Assume now that $\Diamond \varphi \notin q^-$. By the same token, $\Diamond \varphi$ must be in $q^+$. Then $\Diamond \varphi \in p^+$, reaching a contradiction again.
\end{proof}

The following lemma states that, given a suitable pair $p$ where $\Diamond \varphi$ holds, we can find a second suitable pair $q$ where $\varphi$ holds such that $p\hat{R}q$.

\begin{lemma}[Pair existence]
\label{lem:pair_existence}
  Let $\Sigma$ be a signature with a finite set of constants $C$, and $\Phi$ be a finite set of closed formulas in the language of $\Sigma$. If $p$ is a $\Cl_C(\Phi)$-MCW pair and $\Diamond \varphi \in p^+$, then there is a finite set of constants $D \supseteq C$ and a $\Cl_{D}(\Phi)$-MCW pair $q$ such that $p\hat{R}q$, $\varphi \in q^+$, and $\mdepth(q^+) < \mdepth(p^+)$.
\end{lemma}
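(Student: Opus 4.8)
The plan is to build a concrete ``seed'' pair $q_0$ that forces $\varphi$ into the positive part and records the negative-diamond obligations imposed by $p\hat Rq$, verify that $q_0$ is consistent, and then invoke Lemma~\ref{lem:lindenbaum} to complete $q_0$ to a $\Cl_D(\Phi)$-MCW pair $q$ while keeping its modal depth pinned to $\mdepth(\varphi)$.

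Concretely, I would take
\[
  q_0 := \la \{\varphi\},\ \{\psi \mid \Diamond\psi\in p^-\}\cup\{\Diamond\psi\mid \Diamond\psi\in p^-\}\ra .
\]
Since $\Diamond\varphi\in p^+\subseteq\Cl_C(\Phi)$ one gets $\varphi\in\Cl_C(\varphi)\subseteq\Cl_C(\Phi)$, and every formula in $q_0^-$ lies in $\Cl_C(\Phi)$ because $p^-\subseteq\Cl_C(\Phi)$; hence $q_0\subseteq\Cl_C(\Phi)$ and Lemma~\ref{lem:lindenbaum} will be applicable. To see that $q_0$ is consistent one checks $\varphi\not\vdash\delta$ for each $\delta\in q_0^-$: if $\delta=\psi$ with $\Diamond\psi\in p^-$ and $\varphi\vdash\psi$, then Rule~\ref{rule:necessitation} gives $\Diamond\varphi\vdash\Diamond\psi$, so $p^+\vdash\Diamond\psi$, contradicting $\Diamond\psi\in p^-$ and the consistency of $p$; if $\delta=\Diamond\psi$ with $\Diamond\psi\in p^-$ and $\varphi\vdash\Diamond\psi$, then Rule~\ref{rule:necessitation} together with Axiom~\ref{ax:transitivity} gives $\Diamond\varphi\vdash\Diamond\Diamond\psi\vdash\Diamond\psi$, contradicting consistency of $p$ again. (In particular $\varphi\notin q_0^-$: otherwise either $\Diamond\varphi\in p^-$, impossible since $\Diamond\varphi\in p^+$, or $\varphi=\Diamond\psi\in p^-$, but then Axiom~\ref{ax:transitivity} and Lemma~\ref{lem:maximal_consistentP} would put $\varphi$ in $p^+$.)

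Applying Lemma~\ref{lem:lindenbaum} to $q_0$ yields a finite $D\supseteq C$ and a $\Cl_D(\Phi)$-MCW pair $q\supseteq q_0$ with $\mdepth(q^+)=\mdepth(q_0^+)=\mdepth(\varphi)$. Then $\varphi\in q_0^+\subseteq q^+$, and $\mdepth(q^+)=\mdepth(\varphi)<\mdepth(\varphi)+1=\mdepth(\Diamond\varphi)\le\mdepth(p^+)$ since $\Diamond\varphi\in p^+$. For $p\hat Rq$, clause (i) holds because each $\Diamond\psi\in p^-$ has $\psi,\Diamond\psi\in q_0^-\subseteq q^-$, and clause (ii) is witnessed by $\Diamond\varphi\in p^+$: since $\Diamond\varphi\in\Cl_C(\Phi)\subseteq\Cl_D(\Phi)$, maximality puts it in $q^+$ or $q^-$, and it cannot be in $q^+$, for then $q^+\vdash\Diamond\varphi$ and Lemma~\ref{lem:mdepth} would force $\mdepth(q^+)\ge\mdepth(\varphi)+1$. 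The main obstacle is precisely this last point: one must guarantee that the Lindenbaum completion does not absorb $\Diamond\varphi$ (nor any $\Diamond\chi\in p^+$) into $q^+$, which would destroy clause (ii) of $\hat R$; this is exactly what the modal-depth control in Lemma~\ref{lem:lindenbaum} buys, and the same bound simultaneously gives the required strict decrease $\mdepth(q^+)<\mdepth(p^+)$.
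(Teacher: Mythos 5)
Your proof is correct and takes essentially the same route as the paper: seed a pair with $\{\varphi\}$ positive and the $\delta, \Diamond\delta$ obligations negative, verify consistency via necessitation plus the transitivity axiom against the consistency of $p$, and then complete it with Lemma~\ref{lem:lindenbaum}, reading off $\mdepth(q^+) = \mdepth(\varphi) < \mdepth(p^+)$. The only (harmless) deviation is that the paper also puts $\Diamond\varphi$ into the seed's negative set, using Corollary~\ref{cor:A|/-<>A} to keep the seed consistent, whereas you recover $\Diamond\varphi \in q^-$ after the completion from $\Cl_D(\Phi)$-maximality together with the modal-depth clause of Lemma~\ref{lem:lindenbaum}; both arguments rest on the same depth considerations and are equally valid.
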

\begin{proof}
  Consider the pair $r = \pair{\{\varphi\}}{\{\delta, \Diamond \delta \ | \ \Diamond \delta \in p^-\} \cup \{\Diamond \varphi\}}$. Assume that $r$ is not consistent, and thus that there is a formula $\psi \in r^-$ such that $\varphi \vdash \psi$. It cannot be that $\psi$ is $\Diamond \varphi$ due to Lemma~\ref{cor:A|/-<>A}. Thus there is $\Diamond \delta \in p^-$ such that either $\varphi \vdash \delta$ or $\varphi \vdash \Diamond \delta$. By Rule~\ref{rule:necessitation} we get either $\Diamond \varphi \vdash \Diamond \delta$ or $\Diamond \varphi \vdash \Diamond \Diamond \delta$, which also implies $\Diamond \varphi \vdash \Diamond \delta$ by Axiom~\ref{ax:transitivity}. This contradicts the consistency of $p$, which leads us to conclude that $r$ is consistent.
  
  We can now use Lemma~\ref{lem:lindenbaum} to obtain a finite set of constants $D \supseteq C$ and a $\Cl_{D}(\Phi)$-MCW pair $q \supseteq r$ such that $\mdepth(q^+) = \mdepth(r^+) = \mdepth(\varphi) < \mdepth(p^+)$.

  It remains to show that $p\hat{R}q$, but this is clear by the definition of $r$: for every $\Diamond \delta \in p^-$, the formulas $\delta$ and $\Diamond \delta$ are in $r^-$ (and hence in $q^-$), and the formula $\Diamond \varphi$ is both in $p^+$ and in $q^-$.
\end{proof}

We are now ready to define an adequate model $\M[p]$ from any given finite and consistent pair $p$ such that $\M[p]$ satisfies the formulas in $p^+$ and doesn't satisfy the formulas in $p^-$.
The idea is to build a term model where each world $w$ is a $\Cl_{M_w}(p)$-MCW pair, and the worlds are related by (a sub-relation of) $\hat{R}$. The worlds in this model will be pairs of formulas in different signatures, as we will add new constants every time we create a new world. However, the model is intended to satisfy only formulas in the original signature of $p$.

\begin{definition}%[$\M[p]$]
\label{def:Mp}
Given a finite consistent pair $p$ of closed formulas with constants in a finite set $C$, we define an adequate model $\M[p]$.
  
  We start by defining the underlying frame in an iterative manner. The root is given by Lemma~\ref{lem:lindenbaum} applied to $C$ and $p$, obtaining $D$ and $q$. Frame $\F^0$ is then defined such that its set of worlds is $W^0 := \{q\}$, its relation $R^0$ is empty, and the domain of $q$ is $M^0_{q} := D$.

  Assume now that we already have a frame $\F^i$, and we set out to define $\F^{i+1}$ as an extension of $\F^i$. For each leaf $w$ of $\F^i$, i.e., each world such that there is no world $v \in \F^i$ with $w R^i v$, and for each formula $\Diamond \varphi \in w^+$, use Lemma~\ref{lem:pair_existence} to obtain a finite set $E \supseteq M^i_w$ and a $\Cl_E(w)$-MCW pair $v$ such that $w\hat{R}v$, $\varphi \in v^+$, and $\mdepth(v^+) < \mdepth(w^+)$. Now add $v$ to $W^{i+1}$, add $\la w, v \ra$ to $R^{i+1}$, and define $M^{i+1}_v$ as $E$.

  The process described above terminates because each pair is finite and the modal depth of $p^+$ (and consequently of $\Cl_X(p)$ for any set $X$) is also finite. Thus there is a final frame $\F^{\mdepth(p^+)}$. This frame is inclusive by construction, but not transitive. We obtain $\F[p]$ as the transitive closure of $\F^{\mdepth(p^+)}$, which can be easily seen to still be inclusive. Thus the frame $\F[p]$ is adequate.

  In order to obtain the model $\M[p]$ based on the frame $\F[p]$, let $I_q$ take constants in $C$ to their corresponding version as domain elements and if $w$ is any other world, let $I_w$ coincide with $I_q$. This is necessary to make sure that the model is concordant, because $q$ sees every other world, and is sufficient to see that $\M[p]$ is adequate.
  Finally, given an $n$-ary predicate letter $S$ and a world $w$, define $S^{J_w}$ as the set of $n$-tuples $\la d_0, \hdots, d_{n-1}\ra \subseteq (M_w)^n$ such that $S(d_0, \hdots, d_{n-1}) \in w^+$.
\end{definition}

\begin{lemma}
  Let $p$ be as above. The following are properties of $\F[p] = \la W, R,\allowbreak \{M_w\}_{w \in W}\ra$ and $\M[p] = \la \F[p], \{I_w\}_{w \in W}, \{J_w\}_{w \in W} \ra$:
  \begin{enumerate}[label=\upshape(\roman*),ref=\thetheorem.(\roman*)]
    \item Every world $w \in W$ is $\Cl_{M_w}(p)$-maximal consistent and fully witnessed.
    \item For every world $w \in W$, we have $\top \in w^+$.
    \item For any two worlds $w, u \in W$, if $wRu$, then $w\hat{R}u$. \label{item:RimplieshatR}

  \end{enumerate}
\end{lemma}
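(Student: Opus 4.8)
The plan is to read off all three items directly from the construction in Definition~\ref{def:Mp}, using Lemmas~\ref{lem:lindenbaum}, \ref{lem:pair_existence}, \ref{lem:maximal_consistentP}, and \ref{lem:hatR-transitive-irreflexive}. For item~(i) I would induct on the stage $i \le \mdepth(p^+)$ at which a world first enters the frame $\F^i$. In the base case the root $q$ is produced by Lemma~\ref{lem:lindenbaum} applied to $C$ and $p$ with $\Phi := p^+ \cup p^-$ (so the hypothesis $p \subseteq \Cl_C(\Phi)$ is immediate), which gives a $\Cl_D(p)$-MCW pair with $M_q = D$, hence a $\Cl_{M_q}(p)$-MCW pair. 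For the inductive step, a world $v$ entering at stage $i+1$ is obtained from a leaf $w$ of $\F^i$ and a formula $\Diamond\varphi \in w^+$ via Lemma~\ref{lem:pair_existence}, which yields a $\Cl_E(w)$-MCW pair $v$ with $M_v = E \supseteq M_w$. To rewrite $\Cl_E(w)$ as $\Cl_{M_v}(p)$, note that the induction hypothesis together with Lemma~\ref{lem:maximal_consistentP} gives $w^+ \cup w^- = \Cl_{M_w}(p)$, and then the standard facts that $\Cl$ is monotone in its constant set and idempotent yield $\Cl_E(w) = \Cl_E(\Cl_{M_w}(p)) = \Cl_E(p) = \Cl_{M_v}(p)$. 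Since forming $\F[p]$ by transitive closure changes neither the set of worlds nor the domains, the property is inherited by $\M[p]$.

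Items~(ii) and~(iii) are then short. For~(ii), by~(i) each world $w$ is $\Cl_{M_w}(p)$-maximal consistent and $\top \in \Cl_{M_w}(p)$; since $w^+$ is nonempty by construction (it always contains the formula that triggered the world's creation) we get $w^+ \vdash \top$ from the axiom $\chi \vdash \top$, so $\top \notin w^-$ by consistency and hence $\top \in w^+$ by Lemma~\ref{lem:maximal_consistentP}. For~(iii), recall that $R$ is the transitive closure of the relation $R^{\mdepth(p^+)}$ built during the construction, and every edge $\la w, v \ra \in R^{\mdepth(p^+)}$ arose from an application of Lemma~\ref{lem:pair_existence}, hence satisfies $w\hat{R}v$. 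By~(i) all worlds are consistent pairs, so by Lemma~\ref{lem:hatR-transitive-irreflexive} the relation $\hat{R}$ is transitive on them; therefore, given $wRu$, any witnessing path $w = w_0\, R^{\mdepth(p^+)}\, w_1\, R^{\mdepth(p^+)}\, \cdots\, R^{\mdepth(p^+)}\, w_k = u$ yields $w\hat{R}u$ by composing the individual $\hat{R}$-steps.

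I expect the only real friction to be the bookkeeping in~(i): the construction produces maximal consistency relative to the \emph{local} datum $\Cl_E(w)$ (the closure of the newly created pair $w$ under a freshly enlarged set of constants), whereas the statement is phrased uniformly as $\Cl_{M_w}(p)$-MCW. Bridging the two needs the closure-algebra identities indicated above — in particular that re-closing an already maximal pair under a larger constant set reproduces the closure of the original $p$ under that set — which are routine but worth stating once.
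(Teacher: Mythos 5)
Your proposal is correct and follows essentially the same route as the paper: the paper dispatches items (i) and (ii) as immediate consequences of Definition~\ref{def:Mp}, and proves (iii) exactly as you do, by noting that each edge of $R^{\mdepth(p^+)}$ is an $\hat{R}$-edge by construction and then invoking the transitivity of $\hat{R}$ on consistent pairs (Lemma~\ref{lem:hatR-transitive-irreflexive}). Your extra closure-algebra bookkeeping for (i) (monotonicity and absorption of $\Cl$, turning $\Cl_E(w)$ into $\Cl_{M_v}(p)$) only makes explicit what the paper leaves implicit.
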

\begin{proof}
  These are simple consequences of the definition of $\M[p]$. For the last one, note that $R$ is the transitive closure of $R^{\mdepth(p^+)}$. If $wR^{\mdepth(p^+)}u$, then $w\hat{R}u$ by construction. The result then follows by the transitivity of $\hat{R}$ (Lemma~\ref{lem:hatR-transitive-irreflexive}).
\end{proof}

We are almost ready to state the truth lemma, which roughly states that provability at a world $w$ of $\M[p]$ is the same as membership in $w^+$. However, the signatures of the worlds of $\M[p]$ are more expressive than the signature of the formulas we care about. Furthermore, all the formulas in the worlds of $\M[p]$ are closed, while formulas in general may have free variables. In order to deal with this, we replace the free variables of a formula with constants in the appropriate signature first.

\begin{definition}%[$\varphi^g$]
  Given a formula $\varphi$ in a signature $\Sigma$ and a function $g$ from the set of variables to a set of constants in some signature $\Sigma' \supseteq \Sigma$, we define the formula $\varphi^g$ in the signature $\Sigma'$ as $\varphi$ with each free variable $x$ simultaneously replaced by $g(x)$.
\end{definition}

\begin{lemma}[Truth lemma]
\label{lem:modal_truth}
Let $\Sigma$ be a signature with a finite set of constants $C$. For any finite non-empty consistent pair $p$ of closed formulas in the language of $\Sigma$, world $w \in \M[p]$, $w$-assignment $g$, and formula $\varphi$ in the language of $\Sigma$ such that $\varphi^g \in \Cl_{M_w}(p)$, we have that
  \begin{equation*}
    \M[p], w \Vdash^g \varphi
      \iff
      \varphi^g \in w^+
    .
  \end{equation*}
\end{lemma}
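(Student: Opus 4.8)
The plan is to prove the Truth Lemma by induction on the formula $\varphi$, with the understanding that we are proving the biconditional simultaneously for all worlds $w$ and $w$-assignments $g$ (so that in the $\Diamond$ and $\forall$ cases we may invoke the induction hypothesis at other worlds or with other assignments). The base cases are immediate: for $\top$, both sides hold, since $\M[p], w \Vdash^g \top$ always and $\top \in w^+$ by the properties of $\M[p]$. For an atomic formula $S(t_0,\dots,t_{n-1})$, note that $\varphi^g$ replaces each free variable by a constant, and $g(t_i) = t_i^{I_w}$ agrees with the domain-element version of $t_i^g$; so $\M[p], w \Vdash^g S(\vec t)$ iff $\la g(t_0),\dots\ra \in S^{J_w}$ iff $S(\vec t)^g \in w^+$ by the definition of $S^{J_w}$. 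The conjunction case follows directly from the definition of truth together with the fact that $w$ is $\Cl_{M_w}(p)$-maximal consistent: $(\psi \land \chi)^g \in w^+$ iff both conjuncts are, using that $\psi^g, \chi^g \in \Cl_{M_w}(p)$ and $w^+$ is closed downward under $\vdash$ among closure formulas, and closed under conjunction of its members when the conjunction lies in the closure.

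For the diamond case, suppose $\varphi = \Diamond \psi$ with $(\Diamond\psi)^g \in \Cl_{M_w}(p)$. For the direction from truth to membership: if $\M[p], w \Vdash^g \Diamond\psi$, there is $v$ with $wRv$ and $\M[p], v \Vdash^{g^\iota} \psi$; by the induction hypothesis $\psi^{g^\iota} \in v^+$ (noting $\psi^g = \psi^{g^\iota}$ as $g$ and $g^\iota$ agree as functions into constants), and since $wRv$ implies $w\hat R v$ by item~\ref{item:RimplieshatR}, Lemma~\ref{lem:R+} gives $(\Diamond\psi)^g \in w^+$. For the converse, if $(\Diamond\psi)^g \in w^+$, I want a successor $v$ of $w$ in $R$ with $\psi^g \in v^+$. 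Here is the main subtlety: the successors of $w$ were created at a specific stage of the construction of $\M[p]$ using Lemma~\ref{lem:pair_existence}, one for each diamond formula in $w^+$ at that stage — but $w$ itself might have been reached via the transitive closure, or $(\Diamond\psi)^g$ might be a diamond formula that got witnessed. I expect the argument to go: since $w$ is a world of $\M[p]$, it lies in some $\F^i$, and it is not a leaf of the final pre-transitive frame unless its positive part has no diamonds of positive modal depth; when $(\Diamond\psi)^g \in w^+$, the construction explicitly added a successor $v$ with $\psi^g \in v^+$ (using that $\psi^g$ is exactly the formula fed to Lemma~\ref{lem:pair_existence}), and $wRv$ holds in the transitive closure. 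Then the induction hypothesis at $v$ finishes it. This case — matching the syntactic witness created in Definition~\ref{def:Mp} to the semantic existential — is where I expect the real work to lie, particularly checking that $g^\iota$ is a legitimate $v$-assignment and that $\psi^g \in \Cl_{M_v}(p)$.

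For the universal quantifier case, $\varphi = \Forall{x}\psi$ with $(\Forall{x}\psi)^g \in \Cl_{M_w}(p)$. By the definition of $\Cl$, this means $\psi\subst{x}{c}{}^{\,g} \in \Cl_{M_w}(p)$ for every constant $c \in M_w$. From truth to membership: assume $\M[p], w \Vdash^g \Forall{x}\psi$, so for every $w$-assignment $h \xaltern{x} g$ we have $\M[p], w \Vdash^h \psi$. For each $c \in M_w$ pick $h$ with $h(x) = c^{I_w}$; then $\psi^h = \psi\subst{x}{c}{}^{\,g}$ (as functions into constants they agree, and the value $c$ for $x$ matches), so by the induction hypothesis $\psi\subst{x}{c}{}^{\,g} \in w^+$ for every $c \in M_w$. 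Now I need $(\Forall{x}\psi)^g \in w^+$; using maximal consistency of $w$ it suffices to show $(\Forall{x}\psi)^g \notin w^-$, which follows because $w$ is fully witnessed — if $(\Forall{x}\psi)^g \in w^-$ there would be a constant $c$ with $\psi\subst{x}{c}{}^{\,g} \in w^-$, contradicting what we just showed together with consistency. The converse direction is the easier half: if $(\Forall{x}\psi)^g \in w^+$, then for each $c \in M_w$ we have $\psi\subst{x}{c}{}^{\,g} \in w^+$ by Rule~\ref{lem:instantiation} and downward closure in $w^+$ (the instance lies in $\Cl_{M_w}(p)$), and then the induction hypothesis gives $\M[p], w \Vdash^h \psi$ for the corresponding $h$; since every $h \xaltern{x} g$ assigns $x$ some domain element $c^{I_w}$ with $c \in M_w$, we conclude $\M[p], w \Vdash^g \Forall{x}\psi$. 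Throughout I would lean on Remark~\ref{rem:assignment:absent_variables} to handle variables not free in the relevant subformula, and on the fact that all worlds are concordant so that $g^\iota$ behaves well across the accessibility relation.
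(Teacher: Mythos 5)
Your proposal is correct and follows essentially the same route as the paper's proof: induction on $\varphi$, handling the diamond case left-to-right via Lemma~\ref{item:RimplieshatR} and Lemma~\ref{lem:R+} and right-to-left via the successors created in Definition~\ref{def:Mp}, and handling the universal case via full witnessing together with $\Cl_{M_w}(p)$-maximal consistency and instantiation. The subtleties you flag (the match between syntactic witnesses and semantic successors, and the agreement of $g$ with $g^\iota$ and of $\psi^h$ with $\psi\subst{x}{c}^{\,g}$) are exactly the points the paper's proof relies on, and your resolutions are the intended ones.
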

\begin{proof}
  By induction on $\varphi$. The cases of $\top$ and conjunction are straightforward, so we focus on the other ones.

      %It is always the case that $\M[p], w \Vdash^g \top$, and it is also always the case that $\top \in w^+$, for any $w$.

      In the case of the relational symbols, we can take $\varphi = S(x, c)$ without loss of generality, where $c \in C$. Note that $\M[p], w \Vdash^g S(x, c)$ if and only if $\la g(x), c^{I_w} \ra \in S^{J_w}$, if and only if $S(g(x), c^{I_w}) \in w^+$. Since $c \in C$, we know by the definition of $\M[p]$ that $c^{I_w} = c$. Thus, we conclude that $\M[p], w \Vdash^g S(x, c)$ if an only if $S(g(x), c) \in w^+$, as desired.

      %For the left to right implication, note that $\M[p], w \Vdash^g \varphi \land \psi$ if and only if both $\M[p], w \Vdash^g \varphi$ and $\M[p], w \Vdash^g \psi$. By the induction hypothesis, both $\varphi^g$ and $\psi^g$ are in $w^+$, which implies $\varphi^g \land \psi^g \in w^+$ because it is a logical consequence. Thus $(\varphi \land \psi)^g \in w^+$.

      %For the right to left implication, assume that $(\varphi \land \psi)^g \in w^+$. Then $\varphi^g \land \psi^g \in w^+$ and thus $\varphi^g \in w^+$ and $\psi^g \in w^+$. The induction hypothesis gives $\M[p], w \Vdash^g \varphi$ and $\M[p], w \Vdash^g \psi$, and hence $\M[p], w \Vdash^g \varphi \land \psi$, as desired.

      Consider now the case of the universal quantifier. For the left to right implication, suppose that $\M[p], w \Vdash^g \Forall{x} \varphi$. Then for every $w$-assignment $h \xaltern{x} g$ we have $\M[p], w \Vdash^h \varphi$. Thus for each such $h$ we know that $\varphi^h \in w^+$ by the induction hypothesis ($\varphi^h \in \Cl_{M_w}(p)$ because $(\Forall{x} \varphi)^g \in \Cl_{M_w}(p)$). We want to show that $(\Forall{x} \varphi)^g \in w^+$, i.e., that $\Forall{x} \varphi^{g \backslash x} \in w^+$. Assume by contradiction that this is not the case. Then, since $w$ is $\Cl_{M_w}(p)$-maximal consistent, it must be that $\Forall{x} \varphi^{g \backslash x} \in w^-$. Let $c \in M_w$ be a witness such that $\varphi^{g \backslash x}\subst{x}{c} \in w^-$, which exists because $w$ is fully witnessed. Let $h$ be the $w$-assignment that coincides with $g$ everywhere except at $x$, where $h(x) = c$. Then $g \xaltern{x} h$ and $\varphi^{g \backslash x}\subst{x}{c} = \varphi^h$. But this contradicts our earlier observation that for every such $h$ the formula $\varphi^h$ is in $w^+$.

      For the right to left implication, let $\Forall{x} \varphi^{g \backslash x} \in w^+$, and let $h \xaltern{x} g$ be any $w$-assignment. We want to show that $\M[p], w \Vdash^h \varphi$. By the induction hypothesis this is the same as showing that $\varphi^h \in w^+$. But $\varphi^h = \varphi^{g \backslash x}\subst{x}{h(x)}$, and this is in $w^+$ by the completeness and consistency of $w$.

      Finally, consider the case of the diamond. For the left to right implication, assume that $\M[p], w \Vdash^g \Diamond \varphi$. Then there is some world $u$ such that $wRu$ and $\M[p], u \Vdash^{g^\iota} \varphi$. By the induction hypothesis we obtain $\varphi^{g^\iota} \in u^+$, and consequently $\varphi^g \in u^+$. Now, since $wRu$, we also know that $w\hat{R}u$ by Lemma~\ref{item:RimplieshatR}, and thus by Lemma~\ref{lem:R+} we obtain $\Diamond \varphi^g \in w^+$ as desired.

      For the right to left implication, assume that $(\Diamond \varphi)^g \in w^+$. By the construction of $\M[p]$, there is a world $u$ such that $\varphi^g \in u^+$ (and hence $\varphi^{g^\iota} \in u^+$) and $wRu$, and then $\M[p], u \Vdash^{g^\iota} \varphi$ by the induction hypothesis, from which we finally conclude $\M[p], w \Vdash^g \Diamond \varphi$.
\end{proof}

\begin{theorem}[Completeness]
  If $\varphi \not\vdash \psi$, then there are an adequate model $\M$, a world $w \in W$, and a $w$-assignment $g$ such that
  \begin{equation*}
    \M, w \Vdash^g \varphi \quad\text{and}\quad \M, w \not\Vdash^g \psi
    .
  \end{equation*}
\end{theorem}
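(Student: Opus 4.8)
The plan is to reduce the statement about a single underivable sequent to the Truth Lemma (Lemma~\ref{lem:modal_truth}) by constructing an appropriate initial pair. Suppose $\varphi \not\vdash \psi$. First I would collect the free variables occurring in $\varphi$ or $\psi$, say $x_{i_0}, \hdots, x_{i_{k-1}}$, and extend the signature with fresh constants $c_{i_0}, \hdots, c_{i_{k-1}}$, setting $\varphi' := \varphi^g$ and $\psi' := \psi^g$ where $g$ maps each $x_{i_j}$ to $c_{i_j}$ (and is arbitrary otherwise). By Lemma~\ref{lem:signatures} we still have $\varphi' \not\vdash \psi'$ in the extended signature, since a derivation there could be pulled back by replacing the new constants with variables. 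Now both $\varphi'$ and $\psi'$ are closed formulas, so I can form the pair $p := \pair{\{\varphi'\}}{\{\psi'\}}$, which is consistent precisely because $\varphi' \not\vdash \psi'$.

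Next I would take $\Phi$ to be any finite set of closed formulas containing $\varphi'$ and $\psi'$ — for instance the union $\Cl_{C'}(\varphi') \cup \Cl_{C'}(\psi')$ where $C'$ is the extended constant set — so that $p \subseteq \Cl_{C'}(\Phi)$. Applying the machinery of Definition~\ref{def:Mp} to $p$ yields an adequate model $\M[p]$ with root $q$, obtained from $p$ via Lemma~\ref{lem:lindenbaum}; in particular $q$ is a $\Cl_{M_q}(p)$-MCW pair with $p^+ \subseteq q^+$ and $p^- \subseteq q^-$, hence $\varphi' \in q^+$ and $\psi' \in q^-$. Since $q$ is consistent, $\psi' \notin q^+$. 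I would then let $g$ also serve as a $q$-assignment (the new constants live in $M_q$, so $g$ can map the relevant variables into $M_q$; for variables not among the $x_{i_j}$ the choice is irrelevant by Remark~\ref{rem:assignment:absent_variables}), noting that $\varphi^g = \varphi'$ and $\psi^g = \psi'$ both lie in $\Cl_{M_q}(p)$.

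Finally, the Truth Lemma applied at the world $q$ with assignment $g$ gives $\M[p], q \Vdash^g \varphi$ iff $\varphi^g \in q^+$, which holds, and $\M[p], q \Vdash^g \psi$ iff $\psi^g \in q^+$, which fails. Taking $\M := \M[p]$, $w := q$, and this $g$ completes the argument. The only genuinely delicate point is the bookkeeping around free variables and constants: one must ensure that the substitution of fresh constants for free variables is undone correctly so that $\varphi^g$ and $\psi^g$ are literally the closed formulas $\varphi'$ and $\psi'$ sitting in $q$, and that the $w$-assignment $g$ is compatible with the domain $M_q = D$ produced by the Lindenbaum step. Once the setup is aligned, the conclusion is immediate from Lemma~\ref{lem:modal_truth}.
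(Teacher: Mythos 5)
Your proposal is correct and takes essentially the same route as the paper: introduce fresh constants for the free variables, check that the pair $\pair{\{\varphi^g\}}{\{\psi^g\}}$ is consistent, build $\M[p]$ via Definition~\ref{def:Mp}, and read off the result from the Truth Lemma~\ref{lem:modal_truth} at the root. The only small imprecision is in the consistency step: Lemma~\ref{lem:signatures} by itself concerns formulas in the original signature, so the official justification combines (a generalization of) Rule~\ref{rule:constants} with Lemma~\ref{lem:signatures}, which is exactly the ``pull the constants back to variables'' argument you sketch.
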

\begin{proof}
  Define a set of new constants $C := \{c_{x_i} \ | \ x_i \in \fv(p)\}$ and let $g$ be a map from the set of variables to $C$ that assigns $c_{x_i}$ to $x_i$ for each $i$. Define $p$ as $\pair{\varphi^g}{\psi^g}$, and assume it is not consistent, i.e., that $\varphi^g \vdash \psi^g$. Then by (a generalization of) Rule~\ref{rule:constants} and Lemma~\ref{lem:signatures} we would get that $\varphi \vdash \psi$. Thus $p$ is consistent.
  Let $\M[p]$ be the model generated from $p$ as in Definition~\ref{def:Mp} and let $w$ be the root of this model, which is an extension of $p$. Lemma~\ref{lem:modal_truth} tells us that $\M[p], w \Vdash^g \varphi$ and $\M[p], w \not\Vdash^g \psi$ because $\varphi^g \in w^+$ and $\psi^g \notin w^+$.
\end{proof}

We conclude by noting that $\QRC$ has the finite model property, and is thus decidable.

%\bibliographystyle{apa-good}
%\bibliography{../arithmetic, ../modal_logic, ../bib_worms, ../JoostReferences}

\begin{thebibliography}{21}
\expandafter\ifx\csname natexlab\endcsname\relax\def\natexlab#1{#1}\fi
\expandafter\ifx\csname url\endcsname\relax
  \def\url#1{{\tt #1}}\fi
\expandafter\ifx\csname urlprefix\endcsname\relax\def\urlprefix{URL }\fi

\bibitem[{Beklemishev(2004)}]{Beklemishev:2004:ProvabilityAlgebrasAndOrdinals}
Beklemishev, L.~D. (2004).
\newblock Provability algebras and proof-theoretic ordinals, {I}.
\newblock {\em Annals of Pure and Applied Logic\/}, {\em 128\/}, 103--124.

\bibitem[{Beklemishev(2005)}]{Beklemishev:2005:VeblenInGLP}
Beklemishev, L.~D. (2005).
\newblock Veblen hierarchy in the context of provability algebras.
\newblock In P.~H\'ajek, L.~Vald\'es-Villanueva, \& D.~{Westerst\r ahl} (Eds.)
  {\em Logic, Methodology and Philosophy of Science, Proceedings of the Twelfth
  International Congress\/}, (pp. 65--78). Kings College Publications.

\bibitem[{Beklemishev(2012)}]{Beklemishev2012}
Beklemishev, L.~D. (2012).
\newblock Calibrating provability logic: From modal logic to {R}eflection
  {C}alculus.
\newblock In T.~Bolander, T.~Braüner, T.~S. Ghilardi, \& L.~Moss (Eds.) {\em
  Advances in Modal Logic 9\/}, (pp. 89--94). London: College Publications.

\bibitem[{Beklemishev(2014)}]{Beklemishev:2014:PositiveProvabilityLogic}
Beklemishev, L.~D. (2014).
\newblock Positive provability logic for uniform reflection principles.
\newblock {\em Annals of Pure and Applied Logic\/}, {\em 165\/}(1), 82--105.

\bibitem[{Beklemishev et~al.(2014)Beklemishev, Fern\'andez-Duque, \&
  Joosten}]{BeklemishevFernandezJoosten:2014:LinearlyOrderedGLP}
Beklemishev, L.~D., Fern\'andez-Duque, D., \& Joosten, J.~J. (2014).
\newblock {On provability logics with linearly ordered modalities}.
\newblock {\em Studia Logica\/}, {\em 102\/}, 541--566.

\bibitem[{Beklemishev \&
  Pakhomov(2019)}]{BeklemishevPakhomov:2019:GLPforTheoriesOfTruth}
Beklemishev, L.~D., \& Pakhomov, F.~N. (2019).
\newblock Reflection algebras and conservation results for theories of iterated
  truth.
\newblock {\em arXiv:1908.10302 [math.LO]\/}.

\bibitem[{Boolos(1993)}]{Boolos:1993:LogicOfProvability}
Boolos, G.~S. (1993).
\newblock {\em The {L}ogic of {P}rovability\/}.
\newblock Cambridge: Cambridge University Press.

\bibitem[{Dashkov(2012)}]{Dashkov:2012:PositiveFragment}
Dashkov, E.~V. (2012).
\newblock {On the {P}ositive {F}ragment of the {P}olymodal {P}rovability
  {L}ogic GLP}.
\newblock {\em Mathematical Notes\/}, {\em 91\/}(3-4), 318--333.

\bibitem[{Fern\'andez-Duque \&
  Hermo~Reyes(2019)}]{HermoFernandez:2019:BracketCalculus}
Fern\'andez-Duque, D., \& Hermo~Reyes, E. (2019).
\newblock A self-contained provability calculus for {$\Gamma_0$}.
\newblock In R.~Iemhoff, M.~Moortgat, \& R.~J. G.~B. de~Queiroz (Eds.) {\em
  Logic, Language, Information, and Computation - 26th International Workshop,
  WoLLIC 2019, Utrecht, The Netherlands, July 2-5, 2019, Proceedings\/}, vol.
  11541 of {\em Lecture Notes in Computer Science\/}, (pp. 195--207). Springer.

\bibitem[{Fern\'andez-Duque \&
  Joosten(2013)}]{FernandezJoosten:2013:ModelsOfGLP}
Fern\'andez-Duque, D., \& Joosten, J.~J. (2013).
\newblock Models of transfinite provability logics.
\newblock {\em Journal of Symbolic Logic\/}, {\em 78\/}(2), 543--561.

\bibitem[{Fern\'andez-Duque \&
  Joosten(2014)}]{FernandezJoosten:2014:WellOrders}
Fern\'andez-Duque, D., \& Joosten, J.~J. (2014).
\newblock Well-orders in the transfinite {J}aparidze algebra.
\newblock {\em Logic Journal of the {IGPL}\/}, {\em 22\/}(6), 933--963.

\bibitem[{Fern\'andez-Duque \&
  Joosten(2018)}]{FernandezJoosten:2018:OmegaRuleInterpretationGLP}
Fern\'andez-Duque, D., \& Joosten, J.~J. (2018).
\newblock The omega-rule interpretation of transfinite provability logic.
\newblock {\em Annals of Pure and Applied Logic\/}, {\em 169\/}(4), 333--371.

\bibitem[{Goldblatt(2011)}]{Goldblatt2011}
Goldblatt, R. (2011).
\newblock {\em Quantifiers, propositions and identity, admissible semantics for
  quantified modal and substructural logics\/}.
\newblock Cambridge University Press.

\bibitem[{H\'ajek \& Pudl\'ak(1993)}]{HajekPudlak:1993:Metamathematics}
H\'ajek, P., \& Pudl\'ak, P. (1993).
\newblock {\em Metamathematics of {F}irst {O}rder {A}rithmetic\/}.
\newblock Berlin, Heidelberg, New York: Springer-{V}erlag.

\bibitem[{Hughes \& Cresswell(1996)}]{HughesCresswell1996}
Hughes, G.~E., \& Cresswell, M.~J. (1996).
\newblock {\em A New Introduction to Modal Logic\/}.
\newblock Routledge.

\bibitem[{Japaridze(1988)}]{Japaridze:1988}
Japaridze, G. (1988).
\newblock The polymodal provability logic.
\newblock In {\em Intensional logics and logical structure of theories:
  material from the {F}ourth {S}oviet-{F}innish Symposium on Logic\/}, (pp.
  16--48). Tiblisi: Metsniereba.
\newblock In {R}ussian.

\bibitem[{Kikot et~al.(2019)Kikot, Kurucz, Tanaka, Wolter, \&
  Zakharyaschev}]{Kikot2019}
Kikot, S., Kurucz, A., Tanaka, Y., Wolter, F., \& Zakharyaschev, M. (2019).
\newblock Kripke completeness of strictly positive modal logics over
  meet-semilattices with operators.
\newblock {\em Journal of Symbolic Logic\/}, {\em 84\/}(2), 533--588.

\bibitem[{Kripke(1963)}]{Kripke1963}
Kripke, S.~A. (1963).
\newblock Semantical considerations on modal logic.
\newblock {\em Acta Philosophica Fennica\/}, {\em 16\/}, 83--94.

\bibitem[{Solovay(1976)}]{Solovay:1976}
Solovay, R.~M. (1976).
\newblock Provability interpretations of modal logic.
\newblock {\em Israel Journal of Mathematics\/}, {\em 28\/}, 33--71.

\bibitem[{Vardanyan(1986)}]{Vardanyan1986}
Vardanyan, V.~A. (1986).
\newblock Arithmetic complexity of predicate logics of provability and their
  fragments.
\newblock {\em Doklady Akad. Nauk SSSR\/}, {\em 288\/}(1), 11--14.
\newblock In Russian. English translation in Soviet Mathematics Doklady 33,
  569--572 (1986).

\bibitem[{Visser \& de~Jonge(2006)}]{VisserAndDeJonge:2006:NoEscape}
Visser, A., \& de~Jonge, M. (2006).
\newblock No escape from {V}ardanyan's theorem.
\newblock {\em Archive for Mathematical Logic\/}, {\em 45\/}(5), 539--554.

\end{thebibliography}

\end{document}